\newcommand\reallywidehat[1]{%
\savestack{\tmpbox}{\stretchto{%
  \scaleto{%
    \scalerel*[\widthof{\ensuremath{#1}}]{\kern-.6pt\bigwedge\kern-.6pt}%
    {\rule[-\textheight/2]{1ex}{\textheight}}%WIDTH-LIMITED BIG WEDGE
  }{\textheight}%
}{0.5ex}}%
\stackon[1pt]{#1}{\tmpbox}%
}
\newtheorem{theo}{Theorem}[section]
\newtheorem{theorem}[theo]{Theorem}
\newtheorem{lemma}[theo]{Lemma}
\newtheorem{proposition}[theo]{Proposition}
\newtheorem{corollary}[theo]{Corollary}
\newtheorem{definition}[theo]{Definition}
\theoremstyle{definition}
\newtheorem{remark}[theo]{Remark}
\newtheorem{example}[theo]{Example}
\begin{document}
\keywords{Gain graph, Gain-line graph, $G$-phase, oriented $G$-gain graph, Adjacency matrix, Laplacian matrix, Incidence matrix, Switching equivalence, Group representation, Fourier transform.}

\title{Gain-line graphs via $G$-phases and group representations}

\author[M. Cavaleri]{Matteo Cavaleri}
\address{Matteo Cavaleri, Universit\`{a} degli Studi Niccol\`{o} Cusano - Via Don Carlo Gnocchi, 3 00166 Roma, Italia}
\email{matteo.cavaleri@unicusano.it}

\author[D. D'Angeli]{Daniele D'Angeli}
\address{Daniele D'Angeli, Universit\`{a} degli Studi Niccol\`{o} Cusano - Via Don Carlo Gnocchi, 3 00166 Roma, Italia}
\email{daniele.dangeli@unicusano.it}

\author[A. Donno]{Alfredo Donno}
\address{Alfredo Donno, Universit\`{a} degli Studi Niccol\`{o} Cusano - Via Don Carlo Gnocchi, 3 00166 Roma, Italia}
\email{alfredo.donno@unicusano.it}

\begin{abstract}
Let $G$ be an arbitrary group. We define a gain-line graph for a gain graph $(\Gamma,\psi)$  through the choice of an incidence $G$-phase matrix inducing $\psi$.
We prove that the switching equivalence class of the gain function on the line graph $L(\Gamma)$ does not change if one chooses a different $G$-phase inducing $\psi$ or a different representative of the switching equivalence class of $\psi$.
In this way, we generalize to any group some results proven by N. Reff in the abelian case. The investigation of the orbits of some natural actions of $G$ on the set $\mathcal H_\Gamma$ of $G$-phases of $\Gamma$ allows us to characterize gain functions on $\Gamma$, gain functions on $L(\Gamma)$, their switching equivalence classes and their balance property. The use of group algebra valued matrices plays a fundamental role and, together with the matrix Fourier transform, allows us to represent a gain graph with Hermitian matrices and to perform spectral computations. Our spectral results also provide some necessary conditions for a gain graph to be a gain-line graph.
\end{abstract}

\maketitle

\begin{center}
{\footnotesize{\bf Mathematics Subject Classification (2010)}: 05C22, 05C25, 05C50, 05C76, 05E18.}
\end{center}

\section{Introduction}
\emph{Gain graphs} are  graphs where each oriented edge is labeled by an element of a group $G$ in such a way that to the opposite orientation corresponds the group inverse of the element. These labelings, that are maps from the set of oriented edges to $G$, are called \emph{gain functions}. They are usually studied up to an equivalence relation, the \emph{switching equivalence}.  More precisely, a gain graph is a pair $(\Gamma,\psi)$ where $\Gamma$ is the \emph{underlying  graph} and $\psi$ is a gain function on $\Gamma$.
For example, \emph{signed graphs} are gain graphs with $G=\{\pm1\}$, and the classical (unsigned) graphs can be seen as gain graphs on the trivial group (see  \cite{zasglos,zasbib} for a glossary and a periodically updated bibliography on the subject). \\
\indent In the recent years many works were devoted to the investigation, in the setting of gain graphs, of several topics coming from the classical case. Among them, there are graph matrices and their spectra (see \ \cite{bel1,nostro,reff0,reff1,adun,shahulgermina}), or the concept of \emph{orientation} and \emph{line graph} (see \ \cite{francesco,reff0,zas4}). All of them are inspired by the corresponding generalizations existing for signed graphs \cite{acharya,bel2,zasign,zaslavskyori,zasmat}.

In this paper we introduce and investigate the gain-line graph of a gain graph, without any assumption on the group $G$. To the best of our knowledge, the line graph has been studied only for signed graphs \cite{zasmat}, for $\mathbb T_4$-gain graphs \cite{francesco} and also  for gain graphs on abelian groups  \cite{reff0}. Our research moves from the latter.\\
\indent In \cite{reff0} Reff introduced the \emph{incidence $G$-phase functions} of a graph $\Gamma$ (in our language, simply $G$-phases), as a generalization of the \emph{bidirections} in signed graphs or, simply, as a generalization of the incidence matrices in the classical theory. For a simple graph, the incidence matrix contains all necessary information to reconstruct its adjacencies. Similarly, a $G$-phase induces a  gain function and then determines a gain graph. In analogy with \cite{zasmat}, a pair consisting of a $G$-phase and the associated gain graph was called  an \emph{oriented $G$-gain graph}. Indeed, Reff defined the concept of line graph in the context of oriented $G$-gain graphs. That is, he defined a map from $G$-phases of a graph to  $G$-phases of its line graph (this map  is denoted with $L$ in the present paper). In  \cite[Theorem~4.2]{reff0} it is proved that, when $G$ is abelian, $G$-phases inducing switching equivalent gain functions on $\Gamma$ are associated with  $G$-phases  inducing switching equivalent gain functions on its line graph $L(\Gamma)$. In other words, he managed to define the gain-line graph for switching equivalence classes of gain functions on abelian groups. The question \cite[Question~2]{reff0}, about a possible generalization of the aforementioned theorem in the non-abelian case, remained open. One of the main results of this paper is a complete answer to this question, presenting a  gain-line construction, well-posed also for non-abelian groups.

We go beyond this mere generalization as we investigate more in depth the set of $G$-phases, interpreting them as a set of group algebra valued matrices, and studying different actions of $G$ on this set. This gives a new tool to study, concurrently, (switching classes of) gain functions on a graph and (switching classes of) gain functions on its line graph (and their balance property), just looking at the orbits of these actions (see Theorem \ref{th:r}, Theorem \ref{th:l}, Theorem \ref{th:rl}, Corollary \ref{cor:bal}). Moreover, this sheds a new light on the relation between the oriented $G$-gain graphs in the sense of \cite{reff0} and the choice of a representative of the switching class of the gain-line (see Corollary \ref{cor:cap}).\\
\indent As we already mentioned, the (unoriented) incidence matrix $N$ of a graph $\Gamma$ identifies the adjacency of $\Gamma$. In particular, one has
 \begin{equation}\label{eq:1}
NN^T=\Delta_{\Gamma}^+,
\end{equation}
 where $\Delta_{\Gamma}^{+}$ is the \emph{signless Laplacian matrix} of $\Gamma$. On the other hand, the incidence matrix $N$ identifies also the adjacency of the line graph $L(\Gamma)$, as
\begin{equation}\label{eq:2}
N^TN=2I+A_{L(\Gamma)},
\end{equation}
where  $A_{L(\Gamma)}$ is the adjacency matrix of $L(\Gamma)$ (we refer the reader to the next section for formal definitions and to \cite[Eq.~7.29]{cve} or \cite{russ} for both equations). In the classical case one can then construct a line graph directly from the incidence matrix of $\Gamma$, and not necessarily from the incidence matrix of $L(\Gamma)$. An analogue is true for signed graphs (see \cite[Eq.~V.1]{zasmat}), for $\mathbb T_4$-gain graphs (see \cite[Theorem~1]{francesco}) and for $\mathbb T$-gain graphs (see \cite[Theorem~5.1]{reff0}), where $\mathbb T$ is the unit circle group and $\mathbb T_4=\{1,i,-1,-i\}$ is its subgroup consisting of the fourth roots of unity. Essentially, this is possible every time one has a complex matrix analogue of the incidence matrix.

Approaching a $G$-phase as a group algebra valued matrix gives us the advantage that the left hand sides  of Eqs.~\eqref{eq:1} and \eqref{eq:2} make sense even when the $G$-phase is not a complex matrix. For this reason in Section \ref{section4}, beside the gain graphs basics, we give some definitions and properties of the space $M_{n\times m}(\mathbb C G)$ of $n\times m$ matrices with entries in the group algebra $\mathbb C G$. We then define the \emph{adjacency matrix} $A_{\Gamma,\psi}$ and the \emph{$s$-Laplacian matrix} $\Delta^s_{\Gamma,\psi}$ of a gain graph $(\Gamma,\psi)$, where $s$ is a \emph{central weak involution} of $G$. We show that two gain functions are switching equivalent if and only if their associated adjacency matrices (or $s$-Laplacian matrices) are conjugated by a diagonal matrix with group valued diagonal entries (see Theorem \ref{th:1}). \\
\indent In Section \ref{sec:phases}, starting from a graph $\Gamma$ with $n$ vertices and $m$ edges, we define the set $\mathcal H_\Gamma$ of the $G$-phases of $\Gamma$ as a subset of $M_{n\times m}(\mathbb C G)$. From a $G$-phase $H$, we define a gain function $\Psi(H)$ on $\Gamma$ (depending on the choice of a central weak involution $s_1\in G$) and a gain function $\Psi_L(H)$ on $L(\Gamma)$ (depending on the choice of a central weak involution $s_2\in G$), in such a way that the analogues of Eqs.~\eqref{eq:1} and \eqref{eq:2} hold (see Lemma \ref{lem:r} and Lemma \ref{lem:l}). We call \emph{compatible} a pair $(\Psi(H),\Psi_L(H))$, that is, a gain function on $\Gamma$ together with a gain function on its line graph $L(\Gamma)$ coming from the same $G$-phase $H$. Actually, every gain function of $\Gamma$ can be seen as the gain function induced by some $G$-phase. More
precisely, starting from an orientation of the edges of the underlying graph $\Gamma$ and a gain function, we produce a $G$-phase inducing such a gain function and then we determine an oriented $G$-gain graph (see Proposition \ref{prop:sur}). Now, using such a $G$-phase, we are able to define a gain-line graph for any gain graph $(\Gamma,\psi)$. The already mentioned results about the orbits in $\mathcal H_\Gamma$ ensure that the switching equivalence class of the obtained gain-line graph of $(\Gamma,\psi)$ does not depend on the choice of the $G$-phase inducing $\psi$ nor on the choice of the representative of the switching class of $\psi$. Even more, compatibility of a pair depends only on the switching equivalence class of its components (see Corollary \ref{cor:nuovo}). The answer to \cite[Question~2]{reff0} follows since, by virtue of Theorem \ref{prop:line}, our gain-line graph is consistent with the  one defined in \cite{reff0}. Despite every gain function on  $\Gamma$ is induced by  elements in $\mathcal H_\Gamma$, this is not the case for  gain functions on $L(\Gamma)$. In other words, not every gain graph can be a gain-line graph, even when the necessary hypothesis that its underlying graph is a line graph is satisfied (see Example \ref{exvirus}).\\
\indent The problem of recognizing which graphs are line graphs in the classical theory has been very significant and was extensively investigated (see \cite{Bharary}). In Section \ref{sec:rep}, we take a first step in the analogous problem for general gain graphs: still following \cite{reff0}, we focus on \emph{spectral} conditions to be a gain-line graph. The main issue is that a gain graph $(\Gamma,\psi)$ does not have a canonical complex adjacency matrix. A solution comes from the \emph{matrix Fourier transform} approach, already fruitful in \cite{nostro}.
Suppose that $\Gamma$ has $n$ vertices and $\pi$ is a unitary representation  of $G$ of degree $k$; we can define the \emph{represented adjacency matrix} $\widehat{A_{\Gamma,\psi}}(\pi)\in M_{nk}(\mathbb C)$. The latter comes from the generalization, from $\mathbb C G$ to $M_n(\mathbb C G)$, of the \emph{Fourier transform} at $\pi$.  Roughly speaking, it is obtained from the adjacency matrix $A_{\Gamma,\psi}\in M_n(\mathbb C G)$ by replacing each gain entry $g\in G$ by the $k\times k$ complex
matrix $\pi(g)$. It turns out that the matrix $\widehat{A_{\Gamma,\psi}}(\pi)$ is Hermitian and we call its real spectrum the \emph{$
\pi$-spectrum of $(\Gamma,\psi)$}. In Corollary \ref{cor:1} (resp.\ Corollary \ref{cor:2}) we prove that the \emph{$\pi$-spectrum} of a gain-line graph must be contained in $[-2,\infty)$ (resp.\ in $(-\infty,2]$) if
$\pi(s_2)=I_k$ (resp.\ if $\pi(s_2)=-I_k$). In particular, when a representation $\pi$ is irreducible and unitary, the \emph{$\pi$-spectrum} of a gain-line graph cannot have simultaneously eigenvalues less than $-2$ and eigenvalues greater than $2$ (see Corollary \ref{cor:gain-line}). This new condition actually depends also on the gain function and it is not automatically satisfied when the underlying  graph is a line graph, as shown in Example~\ref{ex:qua}.\\
\indent We want to highlight that the matrix Fourier transform approach is crucial not only to define the spectrum of a gain graph via Hermitian matrices: it is also a key tool to formally include the known results about signed or $\mathbb T$-gain graphs within our more general framework, even from a matrix point of view. The complex matrices considered in the theory so far, are in fact nothing but the represented versions of the matrices of gain graphs, with a particular choice of the unitary representation and some particular choices of the central weak involutions  (see Remark \ref{rem:ponte}). This gives a more precise meaning to the statement that our paper contains and extends some of the results of \cite{francesco,reff0,zasmat}.

\section{Orientations and bidirections in graphs}
Let $\Gamma=(V_\Gamma,E_\Gamma)$ be a finite, connected, simple, undirected graph, with at least one edge. The set $V_\Gamma$ is the vertex set, and the set $E_\Gamma$ is the edge set, consisting of unordered pairs of type $\{u,v\}$, with $u,v\in V_\Gamma
$. We write $u\sim v$ if $\{u,v\}\in E_\Gamma$, then  we say that $u$ and $v$ are \emph{adjacent} and that are \emph{endpoints} of the edge $\{u,v\}$.
We will use the set theoretic notation for the edges: for $v\in V_\Gamma$ and $e\in E_\Gamma$ we write $v\in e$ if the edge $e$ is \emph{incident} to $v$, that is, $v$ is one of the endpoints of $e$. If $e_1,e_2\in E_\Gamma$ are both incident to a vertex, we denote that vertex as $e_1\cap e_2$. We write $e_1\cap e_2=\emptyset$ if $e_1$ and $e_2$ do not share a common vertex. For any $v\in V_\Gamma$, we denote by $\deg(v)$ the \emph{degree} of $v$, that is, the number of edges that are incident to $v$. A \emph{walk} $W$ of length $k-1$ in $\Gamma$ is an ordered list of $k$ vertices $v_{1},\ldots, v_{k}$ such that $v_i\sim v_{i+1}$. The walk $W$ is closed if $v_1=v_k$.

Throughout this paper $|V_\Gamma|=n$ and $|E_\Gamma|=m$.
We fix an order  $V_\Gamma=\{v_1,\ldots,  v_n\}$ and we write the \emph{adjacency matrix} $A_{\Gamma}$ of $\Gamma$ with respect to this order as:
$$
(A_{\Gamma})_{i,j}:=
 \begin{cases} 1 &\mbox{if } v_i\sim v_j\\
 0 &\mbox{otherwise.}
 \end{cases}
$$
We also denote by $\Delta_{\Gamma}$ and $\Delta_{\Gamma}^+$ the \emph{Laplacian} and the \emph{signless Laplacian} of $\Gamma$, respectively:
\begin{equation*}
\begin{split}
\Delta_{\Gamma}&:= \deg(\Gamma)- A_{\Gamma}\\
\Delta_{\Gamma}^{+}&:= \deg(\Gamma)+A_{\Gamma},
\end{split}
\end{equation*}
where the matrix $\deg(\Gamma)$ is the diagonal matrix of the degrees of the vertices $V_\Gamma$.

The \emph{line graph} of $\Gamma$ is the graph $L(\Gamma)=(V_{L(\Gamma)}, E_{L(\Gamma)})$ where $V_{L(\Gamma)}=E_\Gamma$, and for $e_1,e_2\in V_{L(\Gamma)}$ we have $e_1\sim e_2$ if $e_1\cap e_2\neq\emptyset$. Fixing also an order of $E_\Gamma=\{e_1,\ldots,e_m\}$ we can define the  adjacency matrix $A_{L(\Gamma)}$ of the line graph and the \emph{incidence matrix} $N$ of $\Gamma$, the latter being the $n\times m$ matrix such that
\begin{eqnarray}\label{eq:N}
 N_{i,j}:=
 \begin{cases} 1 &\mbox{if } v_i\in e_j\\
 0 &\mbox{otherwise.}
 \end{cases}
\end{eqnarray}
We will denote by $O(E_\Gamma)$ the set of \emph{oriented edges}, that is, the set of the ordered pairs consisting of adjacent vertices
$O(E_\Gamma) =\{(u,v): u,v\in V_\Gamma, u\sim v\}$. An oriented edge  $(u,v)\in O(E_\Gamma)$ is thought to go from $u$ to $v$. By definition, there exists a 2-1-projection $p\colon O(E_\Gamma)\to E_\Gamma$ such that $p((u,v))=p((v,u))=\{u,v\}$.\\
\indent An \emph{orientation} $\mathfrak o$ of $\Gamma$ is a subset of $O(E_\Gamma)$ such that the restriction of $p$ to $\mathfrak o$ is a bijection, that is, $\mathfrak o$  assigns a direction to each edge.
If $e=\{v_1,v_2\} \in E_{\Gamma}$, and $(v_1,v_2)\in \mathfrak o$, we denote with $e^{\mathfrak o}:=(v_1,v_2)$ the associated oriented edge.
An example of orientation $\mathfrak o_<$ is the one induced by the order of $V_\Gamma$,  that is $\mathfrak o_<:=\{(v_i,v_j): \{v_i,v_j\}\in E_\Gamma, i<j \}$.

With the graph $\Gamma$ one can also associate a \emph{bidirection} $\mathfrak h$, that is a choice, for every edge, of an independent orientation
at each of its endpoints (see \cite{bid,zasmat}). The bidirection $\mathfrak h$ can be formalized as a map
$\mathfrak h\colon \{(v,e)\in V_\Gamma\times E_\Gamma: v\in e \}\to \{\pm 1\}$, with the interpretation that if $\mathfrak h(v,e)=1$ then the  arrow of the edge $e$ near $v$  points to $v$, if $\mathfrak h(v,e)=-1$ then the arrow of the edge $e$ near $v$ goes away from $v$ (see \cite{zasmat}). Moreover, we can also consider the \emph{incidence matrix $N_{\mathfrak h}$ associated with the bidirection $\mathfrak h$}: more precisely, $N_{\mathfrak h}$ is the $\{0,\pm1\}$-valued $n\times m$ matrix such that
\begin{equation}\label{eq:bidi}
(N_{\mathfrak h})_{i,j}:=
\begin{cases}
\mathfrak h(v_i,e_j) &\mbox{if } v_i\in e_j\\
 0 &\mbox{otherwise.}
\end{cases}
\end{equation}
Notice that the positions of the nonzero entries of $N_{\mathfrak h}$ are the same of the nonzero entries of $N$, for every bidirection $\mathfrak h$.

\section{Gain graphs and group algebra valued matrices}\label{section4}
Let $G$ be a group. We consider a map $\psi\colon O(E_\Gamma)\to G$ such that $\psi(u,v)=\psi(v,u)^{-1}$.
The pair $(\Gamma,\psi)$ is a \emph{$G$-gain graph} (or equivalently, a gain graph on $G$) and $\psi$ is said to be a \emph{gain function} (or $G$-gain function), and $\Gamma$ is said to be the \emph{underlying graph} of the gain graph $(\Gamma,\psi)$.\\
\indent We denote by $G(\Gamma)$ the set of all possible $G$-gain functions on $\Gamma$.
For a given gain function $\psi$ and a walk $W=v_1,\ldots,v_k$ in $\Gamma$, the gain of $W$ is defined as $\psi(W):=\psi(v_1,v_2)\psi(v_2,v_3)\cdots\psi(v_{k-1},v_k)$. A gain graph $(\Gamma,\psi)$ is said to be \emph{balanced} if $\psi(W)=1_G$ for every closed walk $W$, where  $1_G$ denotes the neutral element of $G$.
A fundamental concept in the theory of gain graphs, inherited from the theory of signed graphs, is the \emph{switching equivalence}.

\begin{definition}\label{defswe}
Two gain functions $\psi_1$ and $\psi_2$ on the same underlying graph
$\Gamma$ are switching equivalent, and we shortly write $\psi_1\sim \psi_2$, if there exists $f\colon V_\Gamma\to G$ such that
\begin{equation}\label{eqsw}
\psi_2(v_i,v_j)=f(v_i)^{-1} \psi_1(v_i,v_j)f(v_j), \qquad \forall v_i,v_j: v_i\sim v_j.
\end{equation}
We write $\psi_2=\psi_1^f$ when Eq. \eqref{eqsw} holds. We denote by $[G(\Gamma)]$ the set of the switching equivalence classes of $G$-gain functions on $\Gamma$.
\end{definition}

An element $s\in G$ such that $s^2=1_G$ is called a \emph{weak involution} (see \cite{reff0}), and we denote by $\bold{s}\in G(\Gamma)$ the constant gain function such that $\bold{s}(u,v)=s$ for any $u,v\in V_\Gamma$, with $u\sim v$. Notice that, since $s=s^{-1}$, the map $\bold{s}$ is actually a gain function.
It turns out that $(\Gamma,\psi)$ is balanced if and only if $\psi\sim \bold{1_G}$ (see \cite[Lemma~5.3]{zaslavsky1}). Similarly, if $G$ is a subgroup of $\mathbb C$ containing $-1$, one says that a $G$-gain graph $(\Gamma,\psi)$ is \emph{antibalanced} if $\psi\sim \bold{-1}$.

We denote by $Z(G)=\{g\in G: gh=hg, \; \forall h\in G\}$ the \emph{centrum} of $G$ and we say that $g\in G$ is \emph{central} if $g\in Z(G)$.\\
\indent Associated with the group $G$, we consider the group algebra $\mathbb C G$ of finite $\mathbb C$-linear combinations of elements of $G$. An element $f\in \mathbb C G$ can be expressed as $f=\sum_{x\in G} f_x x$, with $f_x\in \mathbb C$, where we assume that the set $\{x\in G: f_x\neq 0\}$ is finite. The product in $\mathbb C G$ is the linear extension of the one in $G$:
$$
\left(\sum_{x\in G} f_x x\right)\cdot \left(\sum_{y\in G} h_y y\right):= \sum_{x,y\in G} f_x h_y \, x y\qquad \mbox{ for each } f,h\in \mathbb CG.
$$
Moreover, there is also an involution $^*$ on $\mathbb CG$ defined as $f^*:=\sum_{x\in G} \overline{f_x} x^{-1}$, where $\overline{f_x}$ denotes the complex conjugate of $f_x$. Clearly, there exists an embedding of $G$ in $\mathbb C G$ and we simply write $0$ to indicate the zero-vector of $\mathbb C G$.

Consider now the $\mathbb C$-vector space $M_{n\times m}(\mathbb C G)$ consisting of the $n\times m$ matrices with entries in $\mathbb C G$.
For a given $A\in M_{n\times m}(\mathbb C G)$, we define the matrix $A^*\in M_{m\times n}(\mathbb C G)$ such that
$$
(A^*)_{i,j}=(A_{j,i})^*,
$$
where the $^*$ on the right is the involution of $\mathbb C G$.
As in the case of classical matrices, it is possible to define the product of an element $A\in M_{n\times m}(\mathbb C G)$ and an element $B\in M_{m\times q}(\mathbb C G)$ in the following way:
$$
(AB)_{i,j}=\sum_{l=1}^m A_{i,l} B_{l,j},
$$
where the product on the right is that of $\mathbb C G$. In particular, the space $M_{n\times n}(\mathbb C G)$, or simply $M_{n}(\mathbb C G)$, is also an algebra with involution $^*$. For more details see \cite{nostro}. \\
\indent An element $F\in M_{n}(\mathbb C G)$ is said to be \emph{diagonal} if $F_{i,j}=0\in \mathbb C G$ for each $i\neq j$. In this case, we use the notation $F = diag(f_1,\ldots, f_n)$, with $f_i=F_{i,i}$. A matrix $A\in M_{n\times m}(\mathbb C G)$ can also be multiplied, on the left or on the right, by an element $a\in \mathbb C G$, in the following way:
\begin{eqnarray}\label{ettore}
(aA)_{i,j}=aA_{i,j}  \qquad  (Aa)_{i,j}=A_{i,j}a.
\end{eqnarray}
Notice that $aA=diag(\underbrace{a,a,\ldots,a}_{n \textrm{ times}})\, A$ and $Aa=A\, diag(\underbrace{a,a,\ldots,a}_{m \textrm{ times}})$.

The construction of the algebra $M_n(\mathbb C G)$ allows us to define adjacency and Laplacian matrices of a $G$-gain graph even when the group $G$ is not embedded into $\mathbb C$.

\begin{definition}
Let $V_\Gamma = \{v_1,v_2,\ldots, v_n\}$ be an order for the vertex set of $\Gamma$ and let $s\in Z(G)$ be such that $s^2=1_G$. The \emph{adjacency matrix} $A_{\Gamma,\psi}\in M_n(\mathbb C G)$ of $(\Gamma,\psi)$ is the matrix defined by
$$
(A_{\Gamma,\psi})_{i,j}=
 \begin{cases}
 \psi(v_i,v_j) &\mbox{if } \{v_i,v_j\}\in E_\Gamma
  \\  0 &\mbox{otherwise.}
\end{cases}
$$
The \emph{$s$-Laplacian matrix} $\Delta_{\Gamma, \psi}^s\in M_n(\mathbb C G)$ of $(\Gamma,\psi)$ is the matrix defined by
$$
\Delta_{\Gamma, \psi}^s:=\deg(\Gamma, G) + s A_{\Gamma, \psi},
$$
where $\deg(\Gamma,G)=diag(\deg(v_1) 1_G,\ldots,\deg(v_n) 1_G) \in M_n(\mathbb C G)$.
\end{definition}

\begin{remark}\label{rem:u}
If an order of $V_\Gamma$ is fixed, two gain functions $\psi_1$ and $\psi_2$ are equal if and only if $A_{\Gamma, \psi_1}^s=A_{\Gamma, \psi_2}^s$ or, equivalently, if and only if $\Delta_{\Gamma, \psi_1}^s=\Delta_{\Gamma, \psi_2}^s$.
\end{remark}

We are going to introduce a suitable equivalence relation $\sim$ in $M_n(\mathbb C G)$ in order to characterize the switching equivalence of gain functions in terms of equivalence of adjacency or Laplacian matrices (Theorem \ref{th:1}).

\begin{definition}
Let $A,B\in M_n(\mathbb C G)$. Then $A\sim B$ if there exists a diagonal matrix $F\in M_n(\mathbb C G)$ such that
$F_{i,i}\in G$, for each $i=1,\ldots, n$, and $F^*AF=B$. If this is the case, we write $B=A^F$.
\end{definition}

The following is a generalization of \cite[Theorem~1]{nostro}. For the sake of completeness, we present the proof in full.
\begin{theorem}\label{th:1}
Let $\psi_1,\psi_2\in G(\Gamma)$. The following are equivalent:
\begin{enumerate}
\item $\psi_1\sim\psi_2$;
\item $A_{\Gamma,\psi_1}\sim A_{\Gamma,\psi_2}$;
\item $\Delta_{\Gamma, \psi_1}^s\sim \Delta_{\Gamma,\psi_2}^s$.
\end{enumerate}
\end{theorem}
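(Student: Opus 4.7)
The plan is to prove the cycle $(1) \Rightarrow (2) \Rightarrow (3) \Rightarrow (1)$, though in practice $(2) \Leftrightarrow (3)$ will be an essentially direct computation once the matrix $F$ is fixed, so the real work lies in $(1) \Leftrightarrow (2)$.

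For $(1) \Rightarrow (2)$, suppose $\psi_2 = \psi_1^f$ for some $f \colon V_\Gamma \to G$. I set $F := diag(f(v_1),\ldots,f(v_n))\in M_n(\mathbb C G)$, which is a legal ``switching matrix'' since each $f(v_i)\in G$. Computing entry by entry,
\begin{equation*}
(F^* A_{\Gamma,\psi_1} F)_{i,j} = f(v_i)^{-1}\,(A_{\Gamma,\psi_1})_{i,j}\,f(v_j),
\end{equation*}
which equals $f(v_i)^{-1}\psi_1(v_i,v_j)f(v_j)=\psi_2(v_i,v_j)$ when $v_i\sim v_j$ and $0$ otherwise. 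Hence $A_{\Gamma,\psi_2}=A_{\Gamma,\psi_1}^F$. Conversely, if $A_{\Gamma,\psi_2}=A_{\Gamma,\psi_1}^F$ with $F=diag(f_1,\ldots,f_n)$ and $f_i\in G$, I read off entries to define $f(v_i):=f_i$ and obtain exactly the switching relation from Definition \ref{defswe}; by Remark \ref{rem:u} the matrix equation determines $\psi_2$ uniquely, so $\psi_2=\psi_1^f$.

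For $(2)\Leftrightarrow (3)$, I will use the same $F$. The key observation is that the matrix $\deg(\Gamma,G)$ is invariant under $F$-conjugation: its diagonal entries $\deg(v_i)1_G$ are central scalars, so
\begin{equation*}
(F^*\deg(\Gamma,G)F)_{i,i}=f_i^{-1}(\deg(v_i)1_G)f_i=\deg(v_i)1_G,
\end{equation*}
and off-diagonal entries stay zero because $F$ is diagonal. Since $s$ is central, it commutes with every $f_i\in G$, hence with $F$, so $F^*(sA_{\Gamma,\psi_1})F=s\,F^*A_{\Gamma,\psi_1}F$. Combining these, $F^*\Delta^s_{\Gamma,\psi_1}F=\deg(\Gamma,G)+s\,A_{\Gamma,\psi_1}^F$, which equals $\Delta^s_{\Gamma,\psi_2}$ iff $A_{\Gamma,\psi_1}^F=A_{\Gamma,\psi_2}$. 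To recover $A$ from $sA$ in the reverse direction, I use $s^2=1_G$ (and centrality) to left-multiply by $s$, which is where the weak involution hypothesis comes in.

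The main subtleties, rather than true obstacles, are bookkeeping ones: verifying that $F^*$ in $M_n(\mathbb C G)$ acts on diagonal group-valued matrices as entrywise inversion (so that the conjugation $F^*(\cdot)F$ really implements $f(v_i)^{-1}(\cdot)f(v_j)$), and checking that the centrality and weak-involution properties of $s$ are each used in the right place so that the $s$-Laplacian statement is genuinely equivalent to the adjacency statement and not merely implied by it. Once these are handled, the three conditions are linked through the single diagonal matrix $F$, closing the equivalences.
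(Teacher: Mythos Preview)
Your proof is correct and follows essentially the same route as the paper: both arguments translate between the switching function $f$ and the diagonal matrix $F=diag(f(v_1),\ldots,f(v_n))$ to prove $(1)\Leftrightarrow(2)$ entrywise, and then use the $F$-invariance of $\deg(\Gamma,G)$ together with the centrality of $s$ to show that $(A_{\Gamma,\psi_1})^F=A_{\Gamma,\psi_2}$ holds if and only if $(\Delta^s_{\Gamma,\psi_1})^F=\Delta^s_{\Gamma,\psi_2}$ does. Your remark that $s^2=1_G$ (or just invertibility of $s$ in $\mathbb C G$) is needed to cancel the $s$ in the reverse direction of $(2)\Leftrightarrow(3)$ makes explicit a step the paper leaves implicit.
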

\begin{proof}${}$\\
(1)$\iff$(2)\\
If $\psi_1\sim \psi_2$, then there exists a map $f\colon V_\Gamma\to G$ such that Eq. \eqref{eqsw} holds.
The diagonal matrix $F\in M_n(\mathbb C G)$ with entries $F_{i,i}=f(v_i)$ satisfies $F^*A_{\Gamma,\psi_1}F=A_{\Gamma,\psi_2}$, since:
\begin{eqnarray*}
(F^*A_{\Gamma,\psi_1}F)_{i,j}&=&\sum_{r=1}^n\sum_{p=1}^n (F^*)_{i,r}{(A_{\Gamma,\psi_1})}_{r,p} F_{p,j}=\sum_{r=1}^n\sum_{p=1}^n (F_{r,i})^*{(A_{\Gamma,\psi_1})}_{r,p} F_{p,j}\\
&=& f(v_i)^{-1} {(A_{\Gamma,\psi_1})}_{i,j} f(v_j)=
\begin{cases}
f(v_i)^{-1} \psi_1(v_i,v_j)f(v_j) &\mbox{if } \{v_i,v_j\}\in E_\Gamma\\
0 &\mbox{otherwise,}
\end{cases}
\end{eqnarray*}
which corresponds exactly to the entry ${(A_{\Gamma,\psi_2})}_{i,j}$ by Eq. \eqref{eqsw}.
Vice versa, if there exists a diagonal matrix $F\in M_n(\mathbb C G)$ such that $F^*A_{\Gamma,\psi_1}F=A_{\Gamma,\psi_2}$, we can define $f(v_i):=F_{i,i}$ and easily verify that Eq. \eqref{eqsw} holds.\\
(2)$\iff$(3)\\
Clearly
$$
\deg(\Gamma, G)^F=F^*\deg(\Gamma, G) F=\deg(\Gamma, G)
$$
for any diagonal $F\in  M_n(\mathbb C G)$ such that $F_{i,i}\in G$. It follows, by using the fact that $s$ is a central element of $G$, that $A_{\Gamma,\psi_2}=\left(A_{\Gamma,\psi_1}\right)^F$ if and only if $\Delta_{\Gamma, \psi_2}^s=\left(\Delta_{\Gamma,\psi_1}^s\right)^F$.
\end{proof}

\section{$G$-phases and gain-line graphs}\label{sec:phases}
We open this core section of the paper with a key definition.
\begin{definition}\label{g-phase}
For a graph $\Gamma=(V_\Gamma, E_\Gamma)$ and a group $G$, with $|V_\Gamma|=n$ and $|E_\Gamma|=m$, a matrix $H\in M_{n\times m}(\mathbb C G)$ is said to be a \emph{$G$-phase of $\Gamma$} if
 \begin{equation}\label{cond}
\begin{split}
 &H_{i,j}\in G \mbox{ if } v_i\in e_j\quad \mbox{ and}\\
&H_{i,j}=0  \mbox{ if } v_i\notin e_j.
\end{split}
\end{equation}
We denote by  $\mathcal H_\Gamma\subseteq  M_{n\times m}(\mathbb C G) $ the set of all $G$-phases of $\Gamma$. We call  \emph{$G$-phased graph} the pair $(\Gamma,H)$, with $H\in  \mathcal H_\Gamma$ (see \cite{reff0}).
\end{definition}
\begin{example}\label{ex:N}
Let $N_\Gamma(G)\in M_{n\times m}(\mathbb C G)$ be the matrix such that
\begin{equation*}
N_\Gamma(G)_{i,j}=
\begin{cases}
1_G &\mbox{ if } v_i\in e_j\\
0 &\mbox{ if } v_i\notin e_j.
\end{cases}
\end{equation*}
Clearly $N_\Gamma(G)\in \mathcal H_\Gamma$.
\end{example}

\begin{example}
When $G=\{\pm1\}$, the set $\mathcal H_\Gamma$  consists of all possible $2^{2m}$ matrices obtained from the incidence matrix $N$ (see Eq. \eqref{eq:N}) by replacing some $1$ with $-1$.
That is, $\mathcal H_\Gamma$ is in bijection with the set of all possible bidirections of $\Gamma$ (see Eq.~\eqref{eq:bidi}).
\end{example}

Let us denote by $G^n$ the $n$-direct power of the group $G$, so that $G^n =\{ g=(g_1,\ldots,  g_n) : g_i\in G\}$, where the group product is defined coordinate-wise.
Notice that, when an order of $V_\Gamma$ is fixed, an element $g\in G^n$ can be regarded as a map $g\colon V_\Gamma\to G$ such that $g(v_i)=g_i$.

Moreover, the group $G^n$ is canonically isomorphic to the group of all diagonal matrices of $M_{n}(\mathbb C G)$ with diagonal entries in $G$, via the bijection
$$
g \mapsto \underline{g}:=diag(g_1,\ldots,  g_n)=\begin{pmatrix}g_{1} & & \\ & \ddots & \\ & & g_{n}\end{pmatrix} \in M_{n}(\mathbb C G).
$$
In particular, we can multiply a matrix $M_{n\times m}(\mathbb C G)$ by an element of $G^n$ on the left or by an element of $G^m$ on the right. These actions can be restricted to $\mathcal H_\Gamma $, as we show in the next lemma.

\begin{lemma}\label{lem:pippo}
For any $f\in G^n, g\in G^m,  H\in \mathcal H_\Gamma$ one has $\underline{f}H\underline{g}\in \mathcal H_\Gamma$.
\end{lemma}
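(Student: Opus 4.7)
The plan is a direct entrywise computation using the diagonal structure of $\underline{f}$ and $\underline{g}$.

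First I would compute a generic entry $(\underline{f}H\underline{g})_{i,j}$ from the definition of matrix multiplication in $M_{n\times m}(\mathbb C G)$ recalled at the start of Section \ref{section4}. Since $\underline{f}=diag(f_1,\ldots,f_n)$ and $\underline{g}=diag(g_1,\ldots,g_m)$ are diagonal, the double sum $\sum_{k,\ell}\underline{f}_{i,k}H_{k,\ell}\underline{g}_{\ell,j}$ collapses to the single term $f_i\,H_{i,j}\,g_j$, where the products are taken in $\mathbb CG$.

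Next I would check the two defining conditions \eqref{cond} of a $G$-phase for this entry. If $v_i\in e_j$, then by hypothesis $H_{i,j}\in G\subseteq\mathbb CG$, and since $f_i,g_j\in G$ as well, the product $f_iH_{i,j}g_j$ lies in $G$ (using the embedding $G\hookrightarrow\mathbb CG$ and the fact that $G$ is closed under its own multiplication). If $v_i\notin e_j$, then $H_{i,j}=0\in\mathbb CG$, and the $\mathbb C$-bilinearity of the product in $\mathbb CG$ forces $f_iH_{i,j}g_j=0$. Together these show $\underline{f}H\underline{g}\in\mathcal H_\Gamma$.

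There is really no obstacle here; the statement is essentially an observation that left/right multiplication by elements of $G^n$ and $G^m$ preserves both the support pattern and the property of having group-valued nonzero entries. The only subtlety worth stating carefully is that the product in $\mathbb CG$, when restricted to the embedded copy of $G$, coincides with the group product, so stability of $G$ under multiplication is what guarantees $f_iH_{i,j}g_j\in G$ whenever $H_{i,j}\in G$.
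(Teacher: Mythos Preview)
Your proof is correct and follows exactly the paper's approach: the paper also computes $(\underline{f}H\underline{g})_{i,j}=f_iH_{i,j}g_j$ and observes that this immediately yields the two conditions in \eqref{cond}. Your version simply spells out in more detail what the paper records in a single sentence.
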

\begin{proof}
Since $(\underline{f}H\underline{g})_{i,j}=f_i H_{i,j} g_j$  then clearly if $H$ satisfies conditions in Eq. \eqref{cond}  then  also $\underline{f}H\underline{g}$ does.
\end{proof}

An action $P$ of a group $G$ over a set $S$ is a map
$$
P\colon G\times S\to S
$$
such that the induced map
$$
\mathcal P \colon G\to Sym(S),\qquad g\mapsto P_g
$$
is a group homomorphism, where $P_g(s)=P(g,s)$, for every $g\in G$ and $s\in S$, and $Sym(S)$ denotes the permutation group of $S$. For an element $s\in S$, we denote with $St_P(s)=\{g\in G: P(g,s)=s\}$ the \emph{stabilizer} of $s$. Notice that $St_P(s)$ is a subgroup of $G$. We say that the action $P$ is \emph{free} if the stabilizer of each element is trivial or, equivalently, if $P_g$ has no fixed points, for every $g\in G$.
A \emph{$P$-orbit} $\mathcal O$ of the action $P$ is a subset of $S$ for which there exists $s\in S$ such that $\mathcal O=\{P(g,s)|g\in G\}$. The orbits of $P$ define a partition of $S$, whose associated equivalence relation will be denoted by $\sim_P$.

\begin{definition}
Let $\Gamma$ be a graph. Then three actions on the set $\mathcal H_\Gamma$ of $G$-phases of $\Gamma$ can be defined as follows. For every $ f\in G^n, g\in G^m, H\in  \mathcal H_\Gamma,$ we put:
\begin{align*}
r&\colon  G^m\times  \mathcal H_\Gamma \to \mathcal H_\Gamma, \qquad
&r(g,H)&=H\underline{g}\\
l&\colon  G^n\times  \mathcal H_\Gamma \to \mathcal H_\Gamma, \qquad
&l(f,H)&= \underline{f}^*H\\
(l\times r) &\colon  (G^{n}\times G^m)\times  \mathcal H_\Gamma \to \mathcal H_\Gamma, \qquad
&(l\times r)((f,g),H)&=\underline{f}^*H\underline{g}.
\end{align*}
\end{definition}

The actions $r$ and $l$ are clearly free. The stabilizer $St_{l\times r}(H)$, for $H\in \mathcal H_\Gamma$, is in general nontrivial and depends on $H$. The abelian case is analyzed in the following proposition.

\begin{proposition}\label{prop:st}
Let $G$ be an abelian group. Then, for any $H\in \mathcal{H}_\Gamma$:
$$
St_{l\times r}(H)=\{(f,g)\in G^{n}\times G^m: f_1=\cdots=f_n=g_1=\cdots=g_m\}\cong G.
$$
\end{proposition}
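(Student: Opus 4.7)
The plan is to unfold the stabilizer condition entry-by-entry, use the invertibility of the entries of $H$ corresponding to incidences, exploit commutativity to get a pointwise equality between the $f$-components and the $g$-components, and then invoke connectedness of $\Gamma$ to collapse all these equalities to a single constant.

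More precisely, I would first write out what $(f,g)\in St_{l\times r}(H)$ means: the identity $\underline{f}^*H\underline{g}=H$ is diagonal multiplication, so entry-wise it reads $f_i^{-1} H_{i,j}\, g_j = H_{i,j}$ for all $i,j$. If $v_i\notin e_j$ then $H_{i,j}=0$ and the equation is automatic. Otherwise $H_{i,j}\in G$ is invertible in $\mathbb C G$, and after multiplying on the left by $H_{i,j}^{-1}$ I obtain $H_{i,j}^{-1}f_i^{-1}H_{i,j}\, g_j = 1_G$. Since $G$ is abelian, conjugation is trivial, so this is just $f_i^{-1}g_j = 1_G$, i.e.
\begin{equation*}
f_i = g_j \qquad \text{whenever } v_i\in e_j.
\end{equation*}

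Next I would use the structure of the graph. For every edge $e_j=\{v_i,v_k\}$ the above identity gives $f_i=g_j=f_k$, so the function $i\mapsto f_i$ is constant on adjacent vertices. Since $\Gamma$ is connected (this is part of our standing assumption on the underlying graph from Section 2), $f_1=\cdots=f_n=:c$ for a single $c\in G$, and consequently $g_j=c$ for every edge $e_j$ (every edge has at least one endpoint, so every $g_j$ is forced). Conversely, for any $c\in G$ the tuple $((c,\ldots,c),(c,\ldots,c))$ trivially satisfies $f_i^{-1}H_{i,j}g_j=H_{i,j}$ on incidences (using commutativity again), so it lies in the stabilizer. This proves the set equality.

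Finally, the map $c\mapsto \bigl((c,\dots,c),(c,\dots,c)\bigr)$ is clearly a group homomorphism $G\to G^n\times G^m$ with image equal to $St_{l\times r}(H)$ and trivial kernel, yielding the isomorphism $St_{l\times r}(H)\cong G$. The proof has no real obstacle; the only point to be careful about is ensuring that one uses commutativity precisely where needed (to pass from the conjugated form $H_{i,j}^{-1}f_i^{-1}H_{i,j}g_j=1$ to $f_i=g_j$), and that one invokes connectedness of $\Gamma$ to spread the local equality across the whole graph—in the non-abelian case both of these steps fail, which explains why the statement is restricted to abelian $G$.
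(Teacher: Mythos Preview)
Your proposal is correct and follows essentially the same approach as the paper's own proof: both reduce the stabilizer condition entry-wise to $f_i=g_j$ whenever $v_i\in e_j$ via commutativity, and then invoke connectedness of $\Gamma$ to conclude that all the $f_i$'s and $g_j$'s coincide. Your write-up is simply a bit more explicit about the connectedness step and the final isomorphism.
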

\begin{proof}
Clearly if $(f,g)\in G^{n}\times G^m$ is such that  $f_1=\cdots=f_n=g_1=\cdots=g_m$ then, for every $H\in H_\Gamma$, we have
\begin{equation}\label{eq:comm}
(l\times r)((f,g),H)_{i,j}=f_i^{-1}H_{i,j}g_j=H_{i,j}.
\end{equation}
Vice versa, if Eq. \eqref{eq:comm} holds and $G$ is abelian, for every $v_i\in e_j$ we have $f_i=g_j$. The thesis follows by connectedness of $\Gamma$.
\end{proof}
We write $H_1\sim_r H_2$ if $H_1,H_2\in  \mathcal H_\Gamma$  are in the same $r$-orbit, that is, if there exists $g\in G^m$ such that $H_1=H_2 \underline{g}$.
If this is the case, one has:
\begin{equation}\label{eq:hhs}
H_1H_1^*= H_2\underline{g}(H_2\underline{g})^*=   H_2\underline{g}\underline{g}^*H_2^*=H_2 H_2^*.
\end{equation}
Similarly, we write $H_1\sim_l H_2$ if $H_1,H_2\in  \mathcal H_\Gamma$  are in the same $l$-orbit, that is, there exists $f\in G^n$ such that $H_1=  \underline{f}^*H_2 $.
If this is the case, one has:
\begin{equation}\label{eq:hsh}
H_1^*H_1= (\underline{f}^*H_2)^*  \underline{f}^*H_2=H_2^* \underline{f}   \underline{f}^*H_2=H_2^*H_2.
\end{equation}
Finally, we write $H_1\sim_{l\times r} H_2$ if $H_1,H_2\in  \mathcal H_\Gamma$  are in the same $(l\times r)$-orbit,  that is, there exists $f\in G^n$   and  $g\in G^m$ such that
$H_1=  \underline{f}^*H_2 g$.
\begin{remark}\label{rem:transitive}
For any $H_1,H_2,H_3\in \mathcal H_\Gamma$, one has:
$$
H_1\sim_r H_2 \mbox{ and } H_2\sim_l H_3 \implies H_1\sim_{l\times r} H_3.
$$
\end{remark}
We conclude by defining a further equivalence relation, denoted by $\sim_{l\cap r}$, whose associated partition is the one obtained intersecting $r$-orbits with $l$-orbits:
$$
H_1\sim_{l\cap r} H_2 \iff H_1\sim_{l} H_2 \mbox{ and } H_1\sim_{r} H_2.
$$

\subsection{$G$-phases and gains of $\Gamma$}\label{sec:gamma}
The aim of this section is to associate a gain with a given graph $\Gamma$, starting from a $G$-phase of $\Gamma$. We start by giving the following definition.

\begin{definition}\label{eq:psi}
Let $s_1\in Z(G)$  with $s_1^{2}=1_{G}$, and define the map
$$
\Psi\colon \mathcal H_\Gamma \to G(\Gamma)
$$
such that, for every $H\in  \mathcal H_\Gamma$, the gain function $\Psi(H)$ on $\Gamma$ is:
\begin{equation}\label{eq:H}
\Psi(H)(v_i,v_j)=s_1 H_{i,k} (H_{j,k})^{-1}, \quad \mbox{ for } e_k=\{v_i,v_j\}.
\end{equation}
\end{definition}
Observe that $\Psi(H)$ is in fact a gain function, since
$$
\Psi(H)(v_j,v_i)=s_1 H_{j,k} (H_{i,k})^{-1}=(s_1 H_{i,k} (H_{j,k})^{-1})^{-1}=\left(\Psi(H)(v_i,v_j)\right)^{-1}.
$$
The following lemma is a generalization of Eq. \eqref{eq:1}.

\begin{lemma}\label{lem:r}
For any $G$-phase $H\in \mathcal H_\Gamma$, one has:
\begin{equation}\label{eq:r}
HH^*=\Delta^{s_1}_{\Gamma, \Psi(H)}.
\end{equation}
\end{lemma}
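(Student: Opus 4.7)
The plan is a direct matrix computation, comparing the $(i,j)$-entry of $HH^{*}$ with the $(i,j)$-entry of $\Delta^{s_{1}}_{\Gamma,\Psi(H)} = \deg(\Gamma, G) + s_{1}A_{\Gamma,\Psi(H)}$, splitting into the cases $i=j$, $i \neq j$ with $v_{i}\sim v_{j}$, and $i \neq j$ with $v_{i}\not\sim v_{j}$.

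First I would expand the product using the definition of the matrix involution in $M_{n\times m}(\mathbb{C}G)$: for each pair of indices,
$$(HH^{*})_{i,j} = \sum_{k=1}^{m} H_{i,k}(H^{*})_{k,j} = \sum_{k=1}^{m} H_{i,k}(H_{j,k})^{*}.$$
Since $H\in\mathcal{H}_\Gamma$ implies that each nonzero entry $H_{i,k}$ lies in $G$, its $*$-image is $H_{i,k}^{-1}$, and zero entries contribute nothing; in particular the sum is effectively over those $k$ with $v_{i},v_{j}\in e_{k}$.

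For the diagonal case $i=j$, the index $k$ contributes exactly when $v_{i}\in e_{k}$, and each such term is $H_{i,k}H_{i,k}^{-1}=1_{G}$. Summing gives $\deg(v_i)\,1_G$, matching the diagonal of $\deg(\Gamma,G)$, while $(A_{\Gamma,\Psi(H)})_{i,i}=0$ kills the other summand. For the off-diagonal case with $v_{i}\not\sim v_{j}$, no edge is incident to both vertices, so the sum is empty and the entry is $0$, which agrees with both $(\deg(\Gamma,G))_{i,j}=0$ and $(A_{\Gamma,\Psi(H)})_{i,j}=0$. For the remaining case $v_{i}\sim v_{j}$, simplicity of $\Gamma$ forces a unique index $k$ with $e_{k}=\{v_{i},v_{j}\}$, giving $(HH^{*})_{i,j}=H_{i,k}H_{j,k}^{-1}$.

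The last step is to match this with $s_{1}(A_{\Gamma,\Psi(H)})_{i,j}$. Using Definition \ref{eq:psi}, $\Psi(H)(v_{i},v_{j}) = s_{1}H_{i,k}H_{j,k}^{-1}$, and since $s_{1}\in Z(G)$ with $s_{1}^{2}=1_{G}$, we obtain
$$s_{1}\,\Psi(H)(v_{i},v_{j}) = s_{1}^{2}\,H_{i,k}H_{j,k}^{-1} = H_{i,k}H_{j,k}^{-1} = (HH^{*})_{i,j},$$
as required. There is no real obstacle here: the whole proof is just careful bookkeeping, and the only subtle point is using the centrality and the involutive property of $s_{1}$ at the end (which is precisely why $s_{1}$ is assumed to be a central weak involution).
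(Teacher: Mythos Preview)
Your proof is correct and follows essentially the same entrywise computation as the paper's own proof, splitting into the three cases $i=j$, $i\neq j$ with $v_i\sim v_j$, and $i\neq j$ with $v_i\not\sim v_j$. The only cosmetic difference is that you explicitly invoke $s_1^2=1_G$ to pass from $\Psi(H)(v_i,v_j)=s_1 H_{i,k}H_{j,k}^{-1}$ to $s_1\Psi(H)(v_i,v_j)=H_{i,k}H_{j,k}^{-1}$, whereas the paper leaves this implicit.
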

\begin{proof}
First of all, we have:
\begin{eqnarray*}
(HH^*)_{i,i}=\sum_{\ell=1}^m H_{i,\ell} H^*_{\ell,i}=\sum_{\ell=1}^m H_{i,\ell} H_{i,\ell}^{-1}=|\{e\in E_\Gamma: v_i\in e\}| 1_G =\deg(v_i) 1_G.
\end{eqnarray*}
Now, for $i\neq j$ and $e_k=\{v_i,v_j\}$:
\begin{eqnarray*}
(HH^*)_{i,j}=\sum_{\ell=1}^m H_{i,\ell} H^*_{\ell,j}=\sum_{\ell=1}^m H_{i,\ell} H_{j,\ell}^{-1}=H_{i,k} H_{j,k}^{-1}={s_1}\Psi(H)(v_i,v_j).
\end{eqnarray*}
Finally, if $i\neq j$ and $v_i \nsim v_j$:
\begin{eqnarray*}
(HH^*)_{i,j}=\sum_{\ell=1}^m H_{i,\ell} H^*_{\ell,j}=\sum_{\ell=1}^m H_{i,\ell} H_{j,\ell}^{-1}=0
\end{eqnarray*}
and the proof is completed.
\end{proof}

\begin{remark}
Our Lemma \ref{lem:r} generalizes the content of \cite[Eq.~IV.1]{zasmat} when $G=\{\pm 1\}$ and $s_1=-1$, and the content of \cite[Proposition~4]{francesco}, when $G=\mathbb{T}_4=\{1, i, -1, -i\}$ and $s_1=-1$ (see also Remark~\ref{rem:ponte} of Section \ref{sec:rep}).
\end{remark}

\noindent  For a fixed $G$-gain function $\psi$ of $\Gamma$, one can consider the sets
\begin{equation*}\begin{split}
\Psi^{-1}(\psi)&=\{H\in \mathcal H_\Gamma: \Psi(H)=\psi\},\\
\Psi^{-1}([\psi])&=\{H\in \mathcal H_\Gamma: \Psi(H)\sim\psi\}.
\end{split}
\end{equation*}
The set $\Psi^{-1}(\psi)$ will be called the \emph{fiber of $\psi$}, and it is never empty, for any $\psi$, as we will show in Proposition \ref{prop:sur}.

In \cite{reff0}, in analogy with \cite{zaslavskyori,zasmat}, a triple $(\Gamma,\psi,H)$, with $H\in \mathcal H_\Gamma$ such that $\Psi(H)=\psi\in G(\Gamma)$, is called an \emph{oriented $G$-gain graph}. We are going to compare this notion with the classical orientations of $\Gamma$.

\begin{definition}\label{H0}
Let $\mathfrak o$ be an orientation of $\Gamma$. Let us define the map
$$
H_{\mathfrak o}\colon G(\Gamma)\to \mathcal H_\Gamma
$$
such that, for every gain function $\psi$ on $\Gamma$:
\begin{equation*}
H_{\mathfrak o}(\psi)_{i,k}:=
\begin{cases}
0 & \mbox{ if } v_i\not\in e_k\\
\psi(v_i,v_j) &\mbox{ if } e^{\mathfrak o}_k=(v_i,v_j)\\
s_1 &\mbox{ if } e^{\mathfrak o}_k=(v_j,v_i).
\end{cases}        \
\end{equation*}
\end{definition}

\begin{example}\label{ex:paw}
Consider the Paw Graph $P=(V_P,E_P)$ with a fixed order for the vertices $V_P$, a fixed order for the edges $E_P$ and the orientation $\mathfrak o_<$, depicted on the left of Fig. \ref{fig:paw}.
Consider the Quaternion group $\mathbb{Q}_8=\{\pm 1,\pm i,\pm j,\pm k\}$, with the fixed central involution $s_1=-1$. Let $(P,\psi)$ be the $\mathbb{Q}_8$-gain graph whose adjacency matrix is
$$
A_{P,\psi}= \begin{pmatrix}
0& -i& 0& 0\\
i & 0& -j & -i\\
0&j&0 & -k \\
0&i &k&0
\end{pmatrix}.
$$
Notice that the standard way to draw a gain graph, highlighting the gain of only one orientation for each edge, already contains the choice of an orientation. On the right of Fig. \ref{fig:paw} we have chosen it coherently with $\mathfrak o_<$.

\begin{figure}
\begin{center}
\begin{tikzpicture}[scale=0.75]

\begin{scope}
[decoration={
    markings,
    mark=at position 0.5 with {\arrow{Latex[length=3.4mm, width=1.2mm]}}}
    ]
\tikzset{vertex/.style = {draw,circle,fill=black,minimum size=4pt,
                            inner sep=0pt}}
\tikzset{uedge/.style = {-,> = latex'}}

% vertices
\node[vertex] (v3) at  (0,0) [scale=0.8,label=left:$v_3$]{};
\node[vertex] (v4) at  (3,0) [scale=0.8,label=right:$v_4$]{};
\node[vertex] (v2) at  (1.5,2.5) [scale=0.8,label=left:$v_2$]{};
\node[vertex] (v1) at  (1.5,5) [scale=0.8,label=left:$v_1$]{};

\node[vertex] (w3) at  (0+6,0) [scale=0.8,label=left:$v_3$]{};
\node[vertex] (w4) at  (3+6,0) [scale=0.8,label=right:$v_4$]{};
\node[vertex] (w2) at  (1.5+6,2.5) [scale=0.8,label=left:$v_2$]{};
\node[vertex] (w1) at  (1.5+6,5) [scale=0.8,label=left:$v_1$]{};

%edges
\draw[postaction={decorate}]  (v1) -- (v2)node[scale=0.8,midway,left] {$e^{\mathfrak o_<}_1$};
\draw[postaction={decorate}]  (v2) -- (v3)node[scale=0.8,midway,left] {$e^{\mathfrak o_<}_2$};
\draw[postaction={decorate}]  (v3) -- (v4)node[scale=0.8,midway,below] {$e^{\mathfrak o_<}_3$};
\draw[postaction={decorate}] (v2) -- (v4)node[scale=0.8,midway,right] {$e^{\mathfrak o_<}_4$};

\draw[postaction={decorate}]  (w1) -- (w2)node[scale=0.8,midway,left] {$-i$};
\draw[postaction={decorate}]  (w2) -- (w3)node[scale=0.8,midway,left] {$-j$};
\draw[postaction={decorate}]  (w3) -- (w4)node[scale=0.8,midway, below] {$-k$};
\draw[postaction={decorate}] (w2) -- (w4)node[scale=0.8,midway, right] {$-i$};

\end{scope}
\end{tikzpicture}
\end{center}\caption{The Paw graph $P$ with orientation $\mathfrak o_<$ and the gain graph $(P,\psi)$.}\label{fig:paw}
\end{figure}
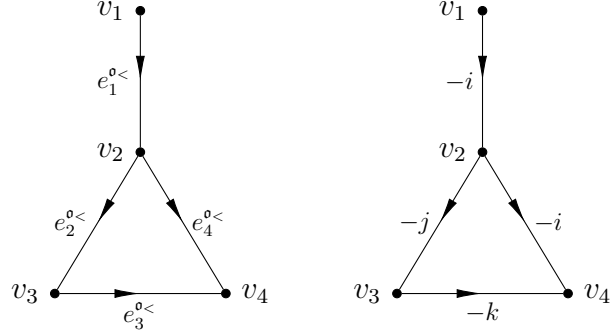
Following Definition \ref{H0} we have:
$$
H_{\mathfrak o}(\psi)= \begin{pmatrix}
-i & 0& 0& 0 \\
-1 & -j& 0 & -i\\
0 &-1 &-k & 0 \\
0&0 &-1&-1
\end{pmatrix}.
$$
\end{example}

\begin{proposition}\label{prop:sur}
For every orientation $\mathfrak o$ of $\Gamma$, the map $H_{\mathfrak o}$ is a \emph{section} of the map $\Psi\colon \mathcal{H}_\Gamma\to G(\Gamma)$, that is, for every $\psi\in G(\Gamma)$ we have $\Psi(H_{\mathfrak o}(\psi))=\psi$. In particular, every fiber $\Psi^{-1}(\psi)$ is not empty.
\end{proposition}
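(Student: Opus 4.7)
The plan is to argue directly by unwinding the two definitions and doing a case analysis on the orientation. Fix an orientation $\mathfrak{o}$ and a gain function $\psi \in G(\Gamma)$. Set $H := H_{\mathfrak{o}}(\psi)$. We must show $\Psi(H)(v_i,v_j) = \psi(v_i,v_j)$ for every oriented edge $(v_i,v_j)$, i.e., for every pair of adjacent vertices $v_i, v_j \in V_\Gamma$. By Definition \ref{eq:psi}, writing $e_k = \{v_i, v_j\}$, this amounts to verifying
\[
s_1 H_{i,k} (H_{j,k})^{-1} = \psi(v_i,v_j).
\]

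I would split the verification into the two possible cases determined by $\mathfrak{o}$, namely $e^{\mathfrak{o}}_k = (v_i, v_j)$ or $e^{\mathfrak{o}}_k = (v_j, v_i)$. In the first case, Definition \ref{H0} gives $H_{i,k} = \psi(v_i,v_j)$ and $H_{j,k} = s_1$, so the product becomes $s_1 \cdot \psi(v_i,v_j) \cdot s_1^{-1}$, which collapses to $\psi(v_i,v_j)$ because $s_1 \in Z(G)$. In the second case, Definition \ref{H0} gives $H_{i,k} = s_1$ and $H_{j,k} = \psi(v_j,v_i)$; using $s_1^2 = 1_G$ and the defining identity $\psi(v_j,v_i)^{-1} = \psi(v_i,v_j)$ of a gain function, the product collapses again to $\psi(v_i,v_j)$.

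Once both cases are settled, we have $\Psi(H_{\mathfrak{o}}(\psi)) = \psi$, so $H_{\mathfrak{o}}$ is a section of $\Psi$, and in particular $H_{\mathfrak{o}}(\psi) \in \Psi^{-1}(\psi)$, showing that the fiber is non-empty for every $\psi$. The only mild subtlety is the use of centrality of $s_1$ (to commute it past $\psi(v_i,v_j)$) and of the weak-involution property $s_1^2 = 1_G$; both are built into the hypotheses on $s_1$ and there is no real obstacle to the argument. The proof is essentially a direct substitution, so I expect no hard step.
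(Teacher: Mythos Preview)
Your proposal is correct and follows essentially the same approach as the paper: both arguments unwind Definition~\ref{eq:psi} and Definition~\ref{H0}, split into the two cases $e^{\mathfrak o}_k=(v_i,v_j)$ and $e^{\mathfrak o}_k=(v_j,v_i)$, and then simplify using centrality of $s_1$ in the first case and $s_1^2=1_G$ together with $\psi(v_j,v_i)^{-1}=\psi(v_i,v_j)$ in the second. There is nothing to add.
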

\begin{proof}
Let $\psi\in G(\Gamma)$. Then, by using Eq. \eqref{eq:H}, we have:
\begin{equation*}\begin{split}
\Psi(H_{\mathfrak o}(\psi))(v_i,v_j)&=
\begin{cases}
0 &\mbox{ if } v_i\nsim v_j\\
s_1H_{\mathfrak o}(\psi)_{i,k} (H_{\mathfrak o}(\psi)_{j,k})^{-1} &\mbox{ if } e_k = \{v_i,v_j\}
\end{cases}\\
&=\begin{cases}
0 &\mbox{ if } v_i\nsim v_j\\
s_1 \psi(v_i,v_j) s_1^{-1} &\mbox{ if } (v_i,v_j)\in \mathfrak o\\
s_1  s_1 \left( \psi(v_j,v_i)\right)^{-1} &\mbox{ if } (v_j,v_i)\in \mathfrak o
\end{cases}\\&=\psi(v_i,v_j),
\end{split}
\end{equation*}
and so $H_{\mathfrak o}(\psi)\in \Psi^{-1}(\psi)$.
\end{proof}

\begin{theorem}\label{th:r}
The orbits of the action $r$ on $\mathcal H_\Gamma$ are exactly the fibers of $\Psi$. In other words, for $H_1,H_2\in\mathcal H_\Gamma$, one has that $H_1\sim_r H_2$
if and only if $\Psi(H_1)=\Psi(H_2)$.
\end{theorem}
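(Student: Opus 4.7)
The plan is to prove the two implications separately by direct computation with the entries of the $G$-phase, exploiting that $s_1$ is central and that $(\cdot)^{-1}$ on group entries coincides with the involution $^*$ on $\mathbb C G$.

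For the forward direction, I assume $H_1 = H_2\underline{g}$ for some $g\in G^m$. By Eq.~\eqref{ettore}, this means $(H_1)_{i,k}=(H_2)_{i,k}g_k$ for every pair $(i,k)$ with $v_i\in e_k$. Plugging into Definition~\ref{eq:psi}, for $e_k=\{v_i,v_j\}$ the factor $g_k g_k^{-1}=1_G$ cancels between the two endpoints, yielding
\[
\Psi(H_1)(v_i,v_j)=s_1(H_2)_{i,k}g_k g_k^{-1}(H_2)_{j,k}^{-1}=\Psi(H_2)(v_i,v_j).
\]

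For the converse, assume $\Psi(H_1)=\Psi(H_2)$. The natural candidate $g\in G^m$ is obtained by declaring, for each $e_k$, $g_k:=(H_2)_{i,k}^{-1}(H_1)_{i,k}$ where $v_i$ is any endpoint of $e_k$. The essential step is to check that this does not depend on the choice of endpoint. Let $e_k=\{v_i,v_j\}$: from $\Psi(H_1)(v_i,v_j)=\Psi(H_2)(v_i,v_j)$, using that $s_1$ is central of order two, I can cancel $s_1$ to obtain $(H_1)_{i,k}(H_1)_{j,k}^{-1}=(H_2)_{i,k}(H_2)_{j,k}^{-1}$, which rearranges to $(H_2)_{i,k}^{-1}(H_1)_{i,k}=(H_2)_{j,k}^{-1}(H_1)_{j,k}$, showing well-definedness. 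Once $g$ is defined, the identity $(H_1)_{i,k}=(H_2)_{i,k}g_k$ holds at both endpoints of $e_k$, and trivially at rows $i'$ with $v_{i'}\notin e_k$ since then both sides are $0$; hence $H_1=H_2\underline{g}$ and $H_1\sim_r H_2$.

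The main obstacle is the well-definedness in the converse direction: a priori the formula for $g_k$ depends on a chosen endpoint, and only the hypothesis $\Psi(H_1)(v_i,v_j)=\Psi(H_2)(v_i,v_j)$ together with the centrality and involutiveness of $s_1$ guarantees agreement. The rest of the argument is bookkeeping with the matrix convention \eqref{ettore} and the support condition \eqref{cond}, and by Proposition~\ref{prop:sur} the fibers are nonempty so the orbit/fiber correspondence is genuine and not vacuous.
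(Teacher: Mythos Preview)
Your proof is correct. The forward implication is the same computation as the paper's, just unpacked at the entry level rather than via the identity $H_1H_1^*=H_2H_2^*$ of Eq.~\eqref{eq:hhs} combined with Lemma~\ref{lem:r}. The converse, however, is organized differently: the paper fixes an orientation $\mathfrak o$, shows that every $H\in\Psi^{-1}(\psi)$ satisfies $H=H_{\mathfrak o}(\psi)\underline{g}$ for a suitable $g$, and then concludes by transitivity of $\sim_r$ through the common reference point $H_{\mathfrak o}(\psi)$. You instead construct $g$ directly from $H_1$ and $H_2$ as $g_k=(H_2)_{i,k}^{-1}(H_1)_{i,k}$ and verify it is independent of the endpoint. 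Your route is more self-contained (no orientation, no section map), while the paper's route has the bonus of exhibiting $H_{\mathfrak o}(\Psi(H))$ as a canonical $r$-representative, which feeds into the subsequent remark and the definition of $\mathcal L_{\mathfrak o}$. A minor note: neither direction of your argument actually uses the centrality of $s_1$, only that $s_1^2=1_G$; centrality is part of the standing hypothesis but is not needed here.
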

\begin{proof}
Suppose $H_1\sim_r H_2$. It follows, by using Eq. \eqref{eq:hhs} and Lemma \ref{lem:r}, that
\begin{equation*}
\Delta^{s_1}_{\Gamma,\Psi(H_1)}=H_1H_1^*=H_2H_2^*=\Delta^{s_1}_{\Gamma,\Psi(H_2)}.
\end{equation*}
Combining with Remark \ref{rem:u} we obtain $\Psi(H_1)=\Psi(H_2)$.

Let us prove the converse implication. We start by proving that, for any orientation $\mathfrak o$ and any gain $\psi$, one has:
\begin{equation}\label{eq:exist}
H\in \Psi^{-1}(\psi)\implies H=r(g,H_{\mathfrak o}(\psi))=H_{\mathfrak o}(\psi)\underline{g}
\end{equation}
for some $g\in G^m$, where the matrix $H_{\mathfrak o}(\psi)$ is the one of Definition~\ref{H0}.
In order to do that, for any $H\in \Psi^{-1}(\psi)$, for each $k=1,\ldots, m$, with $e_k=\{v_i,v_j\}$ and $(v_i,v_j)\in \mathfrak o$, we define $g_k:={s_1} H_{j,k}$. Now we check that, if we choose $g=(g_1,\ldots,g_m)$, then Eq. \eqref{eq:exist} holds. More precisely:
\begin{equation}\label{eq:vice}
\left(H_{\mathfrak o}(\psi)\underline{g}\right)_{i,k}=H_{\mathfrak o}(\psi)_{i,k}g_k=
\begin{cases}
 0 &\mbox{ if } v_i\notin e_k\\
\psi(v_i,v_j){s_1} H_{j,k} &\mbox{ if } e_k=\{v_i,v_j\},\, (v_i,v_j) \in \mathfrak o\\
{s_1} {s_1} H_{i,k}&\mbox{ if } e_k=\{v_i,v_j\},\, (v_j,v_i) \in \mathfrak o.
\end{cases}
\end{equation}
Notice that, since $H\in \Psi^{-1}(\psi)$, it must be $\Psi(H)=\psi$; therefore, for $e_k=\{v_i,v_j\}$, Eq. \eqref{eq:H} gives
$$
\psi(v_i,v_j)={s_1} H_{i,k} H_{j,k}^{-1}.
$$
Combining with Eq. \eqref{eq:vice}, we have
\begin{equation*}
\begin{split}
\left(H_{\mathfrak o}(\psi)\underline{g}\right)_{i,k}&=\begin{cases}
 0 &\mbox{ if } v_i\notin e_k\\
 H_{i,k} &\mbox{ if } e_k=\{v_i,v_j\},\, (v_i,v_j) \in \mathfrak o\\
 H_{i,k}&\mbox{ if } e_k=\{v_i,v_j\},\, (v_j,v_i) \in \mathfrak o
\end{cases}
\\&= H_{i,k}
\end{split}
\end{equation*}
and then $H\sim_r H_{\mathfrak o}(\psi)$. Now let $\psi:=\Psi(H_1)=\Psi(H_2)$. Then by Eq. \eqref{eq:exist} we have $H_1\sim_r H_{\mathfrak o}(\psi)$ and  $H_2\sim_r H_{\mathfrak o}(\psi)$.
Then $H_1\sim_r H_2$ by transitivity.
\end{proof}

In particular, Theorem \ref{th:r} shows that, for any $H\in \mathcal H_\Gamma$ and any orientation $\mathfrak o$, one has $H \sim_r H_{\mathfrak o}(\Psi(H))$. As a consequence, every $G$-phase is $\sim_r$ equivalent to a $G$-phase with at least one entry equal to $s_1$ in each column.

\begin{remark}
The novelty with respect to the existing literature (e.g. \cite{francesco, reff0,reff1,zasmat}) is that, even when $G<\mathbb T$, our matrices take values in the group algebra and not in the complex field. Anyway, as we will show in Section~\ref{sec:rep} (see Remark~\ref{rem:ponte}), we can obtain complex matrices by ``representing'' our abstract $\mathbb C G$-valued matrices. In this sense, we can say that our Theorem \ref{th:r} generalizes the content of \cite[Proposition~3]{francesco} (when $G=\mathbb T$ and $s_1=-1$).
\end{remark}

\subsection{$G$-phases and gains of $L(\Gamma)$}\label{sec:Lgamma}
The aim of this section is to construct a gain for the line graph $L(\Gamma)$, starting from a $G$-phase of $\Gamma$.

\begin{definition}\label{def:Lpsi}
Let $s_2\in Z(G)$ such that $s_2^{2}=1_{G}$. Let us introduce the map
$$
\Psi_L\colon \mathcal H_\Gamma \to G(L(\Gamma))
$$
such that, for every $H\in  \mathcal H_\Gamma$, the gain function $\Psi_L(H)$ on $L(\Gamma)$ is defined by:
\begin{equation}\label{eq:LH}
\Psi_L(H)(e_i,e_j)=s_2 H_{k,i}^{-1} H_{k,j}, \quad \mbox{ for } v_k= e_i \cap e_j.
\end{equation}
\end{definition}
Notice that $\Psi_L(H)$ is in fact a gain function, since
$$
\Psi_L(H)(e_j,e_i)=s_2 H_{k,j}^{-1} H_{k,i}  = (s_2 H_{k,i}^{-1} H_{k,j} )^{-1}= \left(\Psi_L(H)(e_i,e_j)\right)^{-1}.
$$
The following lemma is a generalization of Eq. \eqref{eq:2}.

\begin{lemma}\label{lem:l}
For any $G$-phase $H\in \mathcal H_\Gamma$ one has
\begin{equation}\label{eq:l}
H^*H=2\cdot \underline{ 1_{G^m} }+s_2 A_{L(\Gamma), \Psi_L(H)}.
\end{equation}
\end{lemma}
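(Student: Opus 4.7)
The plan is to prove this by an entrywise computation of $H^*H$, in direct parallel with the proof of Lemma~\ref{lem:r}. Since $H \in \mathcal{H}_\Gamma$, the only nonzero entries of column $i$ of $H$ (i.e. the $i$th column indexed by edge $e_i$) sit in rows corresponding to the two endpoints of $e_i$, and each such entry lies in $G$. This observation drives everything.

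First I would handle the diagonal entries. For each $i$, unfolding the product gives
\[
(H^*H)_{i,i} = \sum_{\ell=1}^{n} H^*_{i,\ell}\, H_{\ell,i} = \sum_{\ell : v_\ell \in e_i} H_{\ell,i}^{-1}\, H_{\ell,i} = 2\cdot 1_G,
\]
since $e_i$ has exactly two endpoints. This reproduces the $2 \cdot \underline{1_{G^m}}$ term on the right-hand side. Next I would address the off-diagonal entries, splitting into two cases. If $e_i \cap e_j = \emptyset$, then for every $\ell$ either $H_{\ell,i} = 0$ or $H_{\ell,j} = 0$, so $(H^*H)_{i,j} = 0$, which matches $(A_{L(\Gamma),\Psi_L(H)})_{i,j}=0$.

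The interesting case is when $e_i \cap e_j = v_k$ for a (unique) vertex $v_k$. Then only the term $\ell = k$ survives in the sum, and
\[
(H^*H)_{i,j} = H_{k,i}^{-1}\, H_{k,j}.
\]
Now I would use the two key properties of $s_2$: being a weak involution, $s_2^2 = 1_G$, and being central, $s_2$ commutes with every element of $G$. Combined with the definition of $\Psi_L(H)(e_i,e_j) = s_2 H_{k,i}^{-1} H_{k,j}$ from Eq.~\eqref{eq:LH}, this yields
\[
s_2 \, \Psi_L(H)(e_i,e_j) = s_2^2 \, H_{k,i}^{-1} H_{k,j} = H_{k,i}^{-1} H_{k,j} = (H^*H)_{i,j},
\]
which matches $(s_2 A_{L(\Gamma),\Psi_L(H)})_{i,j}$ since $e_i \sim e_j$ in $L(\Gamma)$ exactly when they share a vertex.

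I do not expect any serious obstacle: the main subtlety is only bookkeeping, namely to check that the involution $^*$ on $M_{m\times n}(\mathbb C G)$ transposes the indices \emph{and} inverts the group entries (so $H^*_{i,\ell} = H_{\ell,i}^{-1}$ whenever $H_{\ell,i} \in G$), and to use centrality of $s_2$ at the right moment so that the factor $s_2$ coming from the definition of $\Psi_L$ cancels against the prefactor $s_2$ on the right-hand side via $s_2^2 = 1_G$. Assembling the three cases gives the claimed identity.
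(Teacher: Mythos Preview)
Your proposal is correct and follows essentially the same entrywise computation as the paper's own proof: diagonal entries give $2\cdot 1_G$ because each edge has two endpoints, non-adjacent edges give $0$, and adjacent edges give $H_{k,i}^{-1}H_{k,j}=s_2\Psi_L(H)(e_i,e_j)$ via $s_2^2=1_G$. The only cosmetic differences are the order of the off-diagonal cases and your explicit mention of centrality of $s_2$, which in fact is not needed at this particular step (only $s_2^2=1_G$ is used).
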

\begin{proof}
Let us start by computing the diagonal entries of the matrix $H^*H$. For each $i=1,\ldots, m$, one has:
\begin{equation*}
(H^*H)_{i,i}=\sum_{\ell=1}^n H^*_{i,\ell} H_{\ell,i}=\sum_{\ell=1}^n H_{\ell,i}^{-1} H_{\ell,i}=2\cdot1_G,
\end{equation*}
as the product $H_{\ell,i}^{-1} H_{\ell,i}$ is equal to $1_G$ for exactly two values of $\ell$, corresponding to the endpoints of the $i$-th edge, and it is $0$ otherwise.
If $i\neq j$, and $v_k=e_i \cap e_j$, then
\begin{equation*}
(H^*H)_{i,j}=\sum_{\ell=1}^n H^*_{i,\ell} H_{\ell,j}=\sum_{\ell=1}^n H_{\ell,i}^{-1} H_{\ell,j}=H_{k,i}^{-1} H_{k,j}= s_2\Psi_L(H)(e_i,e_j).
\end{equation*}
Finally, if $i\neq j$ and $e_i \cap e_j=\emptyset$, then
\begin{equation*}
(H^*H)_{i,j}=\sum_{\ell=1}^n H^*_{i,\ell} H_{\ell,j}=\sum_{\ell=1}^n H_{\ell,i}^{-1} H_{\ell,j}=0
\end{equation*}
and the proof is completed.
\end{proof}

\begin{remark}
Our Lemma \ref{lem:l} generalizes some existing results, in a sense that we will formalize in Section \ref{sec:rep} (see Remark~\ref{rem:ponte}):
\begin{itemize}
\item \cite[Eq.~V.1]{zasmat} in the case $G=\{\pm1\}$, with $s_2=-1$;
\item \cite[Theorem~1]{francesco} in the case $G=\mathbb{T}_4$, with $s_2=1$;
\item \cite[Theorem~5.1]{reff0} in the case $G=\mathbb T$, with $s_2=\pm 1$.
\end{itemize}
\end{remark}

\noindent For a fixed $G$-gain function $\zeta$ of $L(\Gamma)$, we put
\begin{equation*}\begin{split}
\Psi_L^{-1}(\zeta)&=\{H\in \mathcal H_\Gamma: \Psi_L(H)=\zeta\},\\
\Psi_L^{-1}([\zeta])&=\{H\in \mathcal H_\Gamma: \Psi_L(H)\sim\zeta\},
\end{split}
\end{equation*}
and we call $\Psi_L^{-1}(\zeta)$ the \emph{fiber of $\zeta$}.
Unlike what happens for gain functions of $\Gamma$, not every gain function of $L(\Gamma)$ can be achieved as an image of some $G$-phase via the map $\Psi_L$, as we show in Example \ref{exvirus}. However it is true that, if a switching class is ``achievable'', then every gain function of that class is (Corollary \ref{cor:nuovo}).

\begin{example}\label{exvirus}
Let us consider the star graph $S_3$ with a fixed order for the vertices and for the edges, depicted in Fig. \ref{fig:star} on the left. The line graph $L(S_3)$ is  the complete graph $K_3$ on $3$ vertices. Consider the closed walk $W= e_1,e_2,e_3,e_1$ in $L(S_3)$. Let $H \in \mathcal{H}_{S_3}$. Then:
\begin{equation*}\label{eq:tr}
\begin{split}
\Psi_L(H)(W)&=\Psi_L(H)(e_1,e_2)\Psi_L(H)(e_2,e_3)\Psi_L(H)(e_3,e_1)\\&=s_2 H_{1,1}^{-1} H_{1,2}s_2 H_{1,2}^{-1} H_{1,3}s_2 H_{1,3}^{-1} H_{1,1}=s_2.
\end{split}
\end{equation*}
Clearly, not every gain function $\psi$ on $K_3$ satisfies the property $\psi(W)=s_2$, and it follows $\Psi_L(\mathcal H_{S_3})\subsetneq G(L(S_3))$.
\begin{figure}[h]
\begin{center}
\psfrag{v1}{$v_1$}\psfrag{v2}{$v_2$}\psfrag{v3}{$v_3$}\psfrag{v4}{$v_4$}
\psfrag{e1}{$e_1$}\psfrag{e2}{$e_2$}\psfrag{e3}{$e_3$}\psfrag{e4}{$e_4$}
\includegraphics[width=0.5\textwidth]{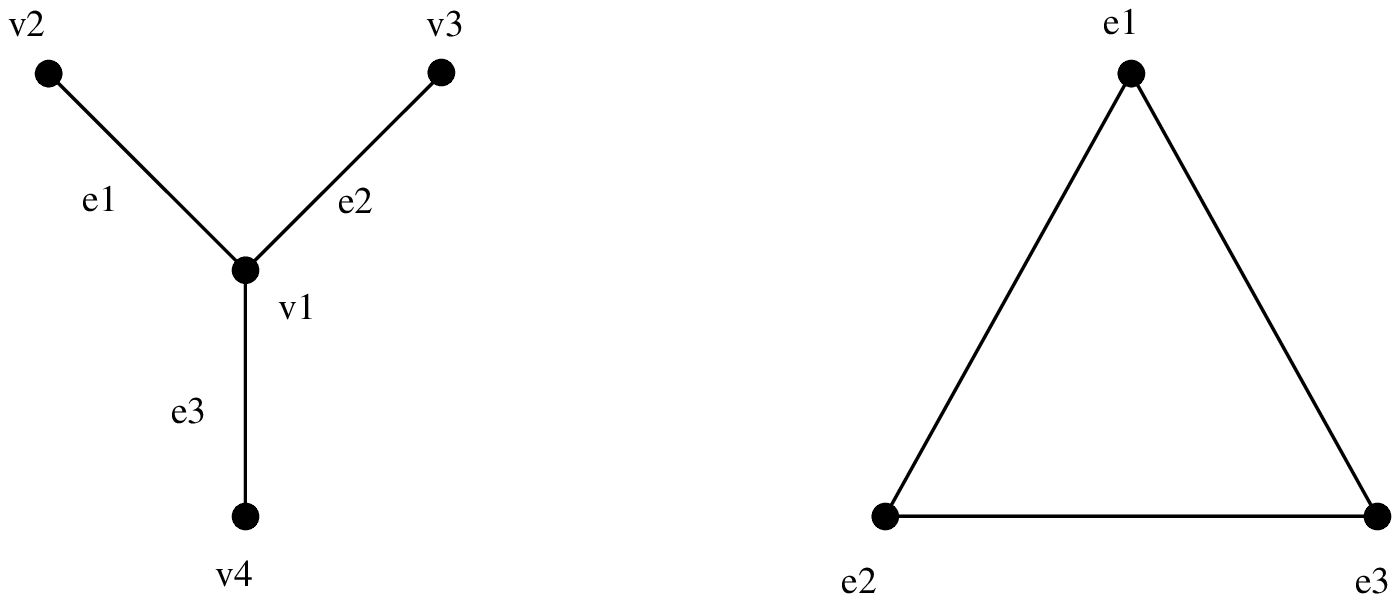}
\end{center}\caption{The Star graph $S_3$ and its line graph $L(S_3)$.}  \label{fig:star}
\end{figure}
\end{example}

\begin{definition}\label{def:compatible}
A gain function $\zeta \in G(L(\Gamma))$ is said to be a \emph{gain-line function} if $\zeta \in \Psi_L(\mathcal H_\Gamma)$.
A pair $(\psi,\zeta)\in G(\Gamma)\times G(L(\Gamma))$ is said to be \emph{compatible} if there exists $H\in \mathcal H_\Gamma$ such that
$\Psi(H)=\psi$ and $\Psi_L(H)=\zeta$.\\
\indent A gain function $\zeta$ of $L(\Gamma)$ is a gain-line function \emph{compatible} with a given $\psi\in G(\Gamma)$ if the pair $(\psi,\zeta)$ is compatible.\\
\indent Finally a gain graph $(L,\zeta)$ is said to be a \emph{gain-line graph} if there exist a graph $\Gamma$ and a $G$-phase $H\in \mathcal H_\Gamma$ such that $L(\Gamma)=L$ and $\Psi_L(H)=\zeta$.
\end{definition}

\begin{theorem}\label{th:l}
The orbits of the action $l$ on $\mathcal H_\Gamma$ are exactly the fibers of  $\Psi_L$.
In other words, for $H_1,H_2\in\mathcal H_\Gamma$, one has that $H_1\sim_l H_2$
if and only if $\Psi_L(H_1)=\Psi_L(H_2)$.
\end{theorem}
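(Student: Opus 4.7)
The proof mirrors the structure of Theorem~\ref{th:r}, replacing $r$ by $l$ and $\Psi$ by $\Psi_L$, and replacing Eq.~\eqref{eq:hhs}/Lemma~\ref{lem:r} by Eq.~\eqref{eq:hsh}/Lemma~\ref{lem:l}.

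For the forward direction, assume $H_1\sim_l H_2$. Then Eq.~\eqref{eq:hsh} yields $H_1^*H_1=H_2^*H_2$, and Lemma~\ref{lem:l} rewrites this as
\[
2\cdot \underline{1_{G^m}}+s_2 A_{L(\Gamma),\Psi_L(H_1)}=2\cdot \underline{1_{G^m}}+s_2 A_{L(\Gamma),\Psi_L(H_2)}.
\]
Cancelling the common diagonal term and left-multiplying by $s_2^{-1}=s_2$ (which is a central unit in $G$) gives $A_{L(\Gamma),\Psi_L(H_1)}=A_{L(\Gamma),\Psi_L(H_2)}$, so Remark~\ref{rem:u} applied to $L(\Gamma)$ implies $\Psi_L(H_1)=\Psi_L(H_2)$.

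For the converse, suppose $\Psi_L(H_1)=\Psi_L(H_2)$. I will produce an $f\in G^n$ such that $H_1=\underline{f}^*H_2$. Because $\Gamma$ is connected with at least one edge, every vertex $v_i$ lies in some edge $e_{j(i)}$, and I set
\[
f(v_i):=H_{2,i,j(i)}\,H_{1,i,j(i)}^{-1}\in G.
\]
The key step is well-definedness: if $v_i$ also lies in another edge $e_k$, then $v_i=e_{j(i)}\cap e_k$, so equality of $\Psi_L(H_1)$ and $\Psi_L(H_2)$ on the oriented edge $(e_{j(i)},e_k)$ of $L(\Gamma)$ reads, via Eq.~\eqref{eq:LH},
\[
s_2 H_{1,i,j(i)}^{-1}H_{1,i,k}=s_2 H_{2,i,j(i)}^{-1}H_{2,i,k},
\]
which rearranges to $H_{2,i,j(i)}H_{1,i,j(i)}^{-1}=H_{2,i,k}H_{1,i,k}^{-1}$, showing the value $f(v_i)$ does not depend on the choice of incident edge. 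With $f$ in hand, the entrywise check is immediate: $(\underline{f}^*H_2)_{i,k}=0=H_{1,i,k}$ whenever $v_i\notin e_k$, while for $v_i\in e_k$ one has $f(v_i)^{-1}H_{2,i,k}=H_{1,i,k}H_{2,i,k}^{-1}H_{2,i,k}=H_{1,i,k}$. Hence $H_1=\underline{f}^*H_2$, i.e.\ $H_1\sim_l H_2$.

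The only place requiring any care is the well-definedness of $f$, but this is really just the translation of the defining identity of $\Psi_L$ from Eq.~\eqref{eq:LH}; all other steps are formal bookkeeping, paralleling the proof of Theorem~\ref{th:r}.
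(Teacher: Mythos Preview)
Your proof is correct and follows essentially the same approach as the paper's own proof: the forward direction uses Eq.~\eqref{eq:hsh} together with Lemma~\ref{lem:l} and Remark~\ref{rem:u}, and for the converse you define the switching element at each vertex $v_i$ as $(H_2)_{i,j}(H_1)_{i,j}^{-1}$ for an incident edge $e_j$ and check its independence of $j$ via Eq.~\eqref{eq:LH}, exactly as the paper does (with only cosmetic differences in notation and in whether one first defines $f_k$ and then inverts, or defines $f(v_i)$ directly).
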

\begin{proof}
Suppose $H_1\sim_l H_2$. Then Eq. \eqref{eq:hsh} and Lemma \ref{lem:l} give
$$
A_{L(\Gamma),\Psi_L(H_1)}=A_{L(\Gamma),\Psi_L(H_2)},
$$
that, by virtue of Remark \ref{rem:u}, implies $\Psi_L(H_1)=\Psi_L(H_2)$.

Vice versa, suppose $\Psi_L(H_1)=\Psi_L(H_2)$. This implies that
\begin{equation*}
v_k= e_i\cap e_j\implies (H_1)_{k,i}^{-1} (H_1)_{k,j}=(H_2)_{k,i}^{-1} (H_2)_{k,j}
\end{equation*}
or, equivalently
\begin{equation}\label{eq:d}
v_k= e_i\cap e_j\implies (H_1)_{k,j} (H_2)^{-1}_{k,j}=(H_1)_{k,i} (H_2)^{-1}_{k,i}.
\end{equation}
For every $v_k\in V_\Gamma$, $e_i\in E_\Gamma$ such that $v_k\in e_i$, we define $f_{k,i}:=(H_1)_{k,i} (H_2)^{-1}_{k,i}\in G$.
Observe that the element $f_{k,i}$ does not depend on the particular choice of the edge containing $v_k$.
In fact, if $e_i\neq e_j$, $v_k\in e_i$ and $v_k\in e_j$, then $v_k=e_i\cap e_j$, and then, by Eq. \eqref{eq:d}, we have $f_{k,i}=f_{k,j}$.
Therefore, we  can simply write $f_{k}$ instead of $f_{k,i}$.  Let us define $f\in G^n$ as $f:=(f_1^{-1},\ldots,  f_n^{-1})$. Then we have, for $v_k\in e_i$,
$$
(l(f,H_2))_{k,i}= (\underline{f}^* H_2)_{k,i}=f_k (H_2)_{k,i}= (H_1)_{k,i} (H_2)^{-1}_{k,i}(H_2)_{k,i} = (H_1)_{k,i}.
$$
It follows that $l(f,H_2)=H_1$ and so $H_1\sim_l H_2$.
\end{proof}

In the light of Lemma \ref{lem:r} and Lemma \ref{lem:l}, the \emph{compatible pairs} in $G(\Gamma)\times G(L(\Gamma))$ are those for which a generalization of Eq. \eqref{eq:1} and Eq. \eqref{eq:2} holds. In the next section we will investigate the well posedness of our definition with respect to the switching equivalence, that is a crucial issue in the theory of gain graphs. The next corollary shows that compatible pairs are nothing but $G$-phases, up to the equivalence relation $\sim_{l\cap r}$.

\begin{corollary}\label{cor:cap}
For $H_1,H_2\in \mathcal H_\Gamma$, the following are equivalent:
\begin{enumerate}
\item $\Psi(H_1)=\Psi(H_2)$ and $\Psi_L(H_1)=\Psi_L(H_2)$;
\item $H_1\sim_{l\cap r} H_2$;
\item there exists $(f,g)\in St_{l\times r} (H_1)$ such that  $H_2=\underline{f}^* H_1$;
\item there exists $(f,g)\in St_{l\times r} (H_1)$ such that  $H_2= H_1\underline{g}$.
\end{enumerate}
\end{corollary}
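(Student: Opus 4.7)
The plan is to establish (1) $\iff$ (2) directly from the two main orbit-fiber theorems already proved, and then to derive (2) $\iff$ (3) and (2) $\iff$ (4) by elementary bookkeeping with diagonal $\mathbb{C}G$-valued matrices.

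For (1) $\iff$ (2), I would simply invoke Theorem \ref{th:r} and Theorem \ref{th:l}. The first tells us $\Psi(H_1)=\Psi(H_2)$ iff $H_1\sim_r H_2$; the second tells us $\Psi_L(H_1)=\Psi_L(H_2)$ iff $H_1\sim_l H_2$. By the definition of $\sim_{l\cap r}$ as the conjunction of these two relations, the equivalence is immediate with no further computation.

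For (2) $\Longrightarrow$ (3), assume $H_1\sim_{l\cap r} H_2$. Then there exist $f\in G^n$ and $g'\in G^m$ with $H_2=\underline{f}^*H_1$ and $H_2=H_1\underline{g'}$. Combining these gives $\underline{f}^*H_1\underline{(g')^{-1}}=H_1$, so the pair $(f,(g')^{-1})$ lies in $St_{l\times r}(H_1)$, and (3) holds with this choice and the original $f$. Conversely, for (3) $\Longrightarrow$ (2), if $(f,g)\in St_{l\times r}(H_1)$ and $H_2=\underline{f}^*H_1$, then $H_1\sim_l H_2$ by definition; moreover the stabilizer condition $\underline{f}^*H_1\underline{g}=H_1$ rewrites as $H_2\underline{g}=H_1$, which yields $H_1\sim_r H_2$. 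The equivalence (2) $\iff$ (4) is proved by the symmetric argument: starting from $H_2=H_1\underline{g}$ and $H_2=\underline{f'}^*H_1$, one verifies that $((f')^{-1},g)\in St_{l\times r}(H_1)$, and conversely the stabilizer condition converts $H_2=H_1\underline{g}$ into $\underline{f}^*H_2=H_1$, hence $H_1\sim_l H_2$.

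No step looks delicate. The only place where care is required is keeping track of where an inverse must be inserted when passing from a witness of $l$-equivalence (or $r$-equivalence) to an element of $St_{l\times r}(H_1)$: a witness $(f,g')$ for the two equivalences separately does not in general lie in the stabilizer, but $(f,(g')^{-1})$ (respectively $((f')^{-1},g)$) does. Once this bookkeeping is written down carefully, all four implications are one-line calculations built on Theorem \ref{th:r} and Theorem \ref{th:l}.
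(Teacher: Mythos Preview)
Your proposal is correct and follows essentially the same route as the paper: the equivalence (1)$\iff$(2) is obtained directly from Theorems~\ref{th:r} and~\ref{th:l}, and the remaining equivalences are short diagonal-matrix manipulations converting a pair of $l$- and $r$-witnesses into a stabilizer element and back. The only cosmetic difference is that the paper chooses the direction of the $r$-witness as $H_1=H_2\underline{g}$ (rather than $H_2=H_1\underline{g'}$), so that the pair $(f,g)$ already lies in $St_{l\times r}(H_1)$ without inserting an inverse; your version with $(f,(g')^{-1})$ is of course equivalent.
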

\begin{proof}
The equivalence (1)$ \iff $(2) is a consequence of Theorem \ref{th:r} and Theorem \ref{th:l}.\\
(3)$ \Longrightarrow $ (2)\\
Suppose $H_2=\underline{f}^* H_1$ (and so $H_1\sim_l H_2$), and that there exists $g\in G^m$ such that $\underline{f}^* H_1\underline{g}=H_1$. This implies
$H_2 \underline{g}=H_1$, that is, $H_1\sim_r H_2$, and then $H_1\sim_{l \cap r} H_2$.\\
(2)$ \Longrightarrow $ (3)\\
Let $H_1\sim_{l \cap r} H_2$, so that there exist $g\in G^m$ and $f\in G^n$ such that
$$
H_1=H_2 \underline{g} \qquad H_2=\underline{f}^* H_1.
$$
Therefore, we have $H_1=\underline{f}^* H_1\underline{g}$, so that $(f,g)\in St_{l\times r} (H_1)$.\\
(2)$ \iff $(4) The proof is similar to the case (2)$ \iff $(3).
\end{proof}

\begin{remark}\label{rem:ab}
Combining Corollary \ref{cor:cap} with Proposition \ref{prop:st}, if $G$ is abelian,
for $H_1,H_2\in \mathcal H_\Gamma$, we deduce that $\Psi(H_1)=\Psi(H_2)$ and $\Psi_L(H_1)=\Psi_L(H_2)$ if and only if there exists $g\in G^m$ such that $H_2=H_1 \underline{g}$ with $g_1=g_2=\cdots=g_m$. By Eq. \eqref{ettore}, this is equivalent to the existence of $g_0\in G$ such that $H_1=H_2 g_0=g_0 H_2$.\\
\indent In other words, the classes of  $G$-phases up to multiplication by a ``scalar'' in $G$, are in bijection with the compatible pairs. In particular, for a given gain function $\psi$ of $\Gamma$, choosing a compatible gain function for the line graph $L(\Gamma)$ is equivalent to choose, up to a scalar multiplication, a $G$-phase in $\Psi^{-1}(\psi)$.
In the language of \cite{reff0}, when $G$ is abelian, an oriented $G$-gain graph $(\Gamma,\psi,H)$, considering $H$ up to scalar multiplications, is a $G$-gain graph $(\Gamma,\psi)$ where a compatible gain $\zeta$ is fixed for $L(\Gamma)$.
\end{remark}

\subsection{$G$-phases, switching equivalence and gain-line graph}
We are going to study how the switching equivalence affects our definitions and actions, with the aim to give a well posed definition of line graph.

As a first step, we investigate the relation between the action of an element  $f\in G^n$ (resp.\ $g\in G^m$) on $\mathcal H_{\Gamma}$ and the switching action of the associated
switching function
$$
f\colon V_{\Gamma}\to G, \;\;f(v_i)\mapsto f_i\qquad \mbox{(resp.\ } g\colon V_{L(\Gamma)}\to G, \;\; g(v_i)\mapsto g_i\mbox{)}
$$
on $G(\Gamma)$ (resp.\ on  $G(L(\Gamma))$), see Definition \ref{defswe}.

\begin{lemma}\label{lem:action}
Let $H\in \mathcal H_\Gamma$, $f\in G^n$ and $g\in G^m$. We  have:
\begin{equation*}
\begin{split}
\Psi(\underline{f}^*H\underline{g})&=\Psi(H)^f\\
\Psi_L(\underline{f}^*H\underline{g})&=\Psi_L(H)^g.
\end{split}
\end{equation*}
\end{lemma}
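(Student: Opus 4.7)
The plan is to unpack both equations directly from the entry formulas, observing that the action of $\underline{f}^*$ on the left and $\underline{g}$ on the right contributes a factor $f_i^{-1}$ on the $i$-th row and a factor $g_k$ on the $k$-th column. Concretely, since the involution on $\mathbb{C}G$ sends a group element to its inverse, $(\underline{f}^*)_{i,i}=f_i^{-1}$, so for $H':=\underline{f}^*H\underline{g}$ one has $H'_{i,k}=f_i^{-1}H_{i,k}g_k$ for every $v_i\in V_\Gamma$, $e_k\in E_\Gamma$. From here both identities reduce to short cancellation arguments.

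For the first identity, I pick $e_k=\{v_i,v_j\}$ and substitute into Eq.~\eqref{eq:H}:
\begin{equation*}
\Psi(H')(v_i,v_j)=s_1 H'_{i,k}(H'_{j,k})^{-1}=s_1 f_i^{-1}H_{i,k}g_k\,g_k^{-1}H_{j,k}^{-1}f_j.
\end{equation*}
The $g_k$ factors cancel, and since $s_1\in Z(G)$ it may be pulled past $f_i^{-1}$, yielding
\begin{equation*}
\Psi(H')(v_i,v_j)=f_i^{-1}\bigl(s_1 H_{i,k}H_{j,k}^{-1}\bigr)f_j=f(v_i)^{-1}\Psi(H)(v_i,v_j)f(v_j)=\Psi(H)^f(v_i,v_j).
\end{equation*}
For the second identity, I take $v_k=e_i\cap e_j$ and substitute into Eq.~\eqref{eq:LH}:
\begin{equation*}
\Psi_L(H')(e_i,e_j)=s_2(H'_{k,i})^{-1}H'_{k,j}=s_2 g_i^{-1}H_{k,i}^{-1}f_k\,f_k^{-1}H_{k,j}g_j.
\end{equation*}
Now the $f_k$ factors cancel, and centrality of $s_2$ allows rewriting the right-hand side as $g_i^{-1}\bigl(s_2 H_{k,i}^{-1}H_{k,j}\bigr)g_j=g(e_i)^{-1}\Psi_L(H)(e_i,e_j)g(e_j)=\Psi_L(H)^g(e_i,e_j)$.

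There is no real obstacle here: the proof is a bookkeeping exercise exploiting (i) the diagonal shape of $\underline{f}$ and $\underline{g}$, (ii) the fact that $g^*=g^{-1}$ on group elements, (iii) the cancellation of the ``wrong-index'' factors ($g_k$ in the first computation, $f_k$ in the second), and (iv) the centrality of the weak involutions $s_1,s_2$, which is needed precisely to pull them across the switching factors. The only point that deserves attention in the write-up is making clear which variable parametrises vertices and which parametrises edges, so that the cancellations match the shapes of $f\in G^n$ and $g\in G^m$.
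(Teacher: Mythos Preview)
Your proof is correct and follows essentially the same approach as the paper's own proof: a direct entrywise verification using $H'_{i,k}=f_i^{-1}H_{i,k}g_k$, the cancellation of the ``wrong-index'' factors, and the centrality of $s_1,s_2$. The paper's argument is line-for-line the same computation, so there is nothing to add.
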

\begin{proof}
Let $e_k=\{v_i,v_j\} \in E_{\Gamma}$. Then, by Eq. \eqref{eq:H}, we have:
\begin{equation*}
\begin{split}
\Psi(\underline{f}^*H\underline{g})(v_i,v_j)&= s_1(\underline{f}^*H\underline{g})_{i,k} (\underline{f}^*H\underline{g})_{j,k}^{-1}=s_1(f_i^{-1}H_{i,k} g_k) (f^{-1}_j H_{j,k} g_k)^{-1}\\&=
s_1f_i^{-1}H_{i,k} g_k g_k^{-1}H_{j,k}^{-1} f_j=f_i^{-1} s_1H_{i,k} H_{j,k}^{-1}f_j=f_i^{-1} \Psi(H)(v_i,v_j) f_j\\&=\Psi(H)^f(v_i,v_j).
\end{split}
\end{equation*}
Let $E_l=\{e_i,e_j\}\in E_{L(\Gamma)}$, $v_k=e_i\cap e_j\in V_\Gamma$. Then, by Eq. \eqref{eq:LH}, we have:
\begin{equation*}
\begin{split}
\Psi_L(\underline{f}^*H\underline{g})(e_i,e_j)&= s_2(\underline{f}^*H\underline{g})_{k,i}^{-1} (\underline{f}^*H\underline{g})_{k,j}=
s_2 (f_k^{-1} H_{k,i} g_i)^{-1} (f_k^{-1}H_{k,j} g_j)\\&=s_2 g_i^{-1} H_{k,i}^{-1} f_kf_k^{-1}H_{k,j} g_j=g_i^{-1} s_2  H_{k,i}^{-1}H_{k,j} g_j=g_i^{-1} \Psi_L(H)(e_i,e_j) g_j\\&= \Psi_L(H)^g(e_i,e_j).
\end{split}
\end{equation*}
\end{proof}
The previous lemma, combined with the results of the previous section, allows us to enunciate one of the main results of the paper.

\begin{theorem}\label{th:rl}
The orbits of the action $l\times r$ on $\mathcal H_\Gamma$ are exactly the subsets $\Psi^{-1}([\psi])$ or, equivalently, the subsets $\Psi_L^{-1}([\zeta])$.
In other words, for $H_1,H_2\in\mathcal H_\Gamma$, the following are equivalent:
\begin{enumerate}
\item $H_1\sim_{l\times r} H_2$
\item $\Psi(H_1)\sim \Psi(H_2)$
\item $\Psi_L(H_1)\sim \Psi_L(H_2).$
\end{enumerate}
In particular, there exists an injection $\mathcal L\colon [G(\Gamma)]  \to [G(L(\Gamma))]$,
such that $\mathcal L([\Psi(H)])=[\Psi_L(H)]$, for every $H\in \mathcal H_\Gamma$.
\end{theorem}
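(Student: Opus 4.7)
My plan is to prove the cyclic chain (1)$\Rightarrow$(2), (2)$\Rightarrow$(1), (1)$\Rightarrow$(3), (3)$\Rightarrow$(1) by leveraging Lemma~\ref{lem:action} (which translates $l\times r$ actions on $\mathcal H_\Gamma$ into switching actions on the associated gains) together with Theorems~\ref{th:r} and \ref{th:l} (which describe the fibers of $\Psi$ and $\Psi_L$ as $r$-orbits and $l$-orbits respectively). Once the equivalence of (1), (2), (3) is established, the existence and injectivity of $\mathcal L$ will be an immediate consequence, together with the surjectivity of $\Psi$ guaranteed by Proposition~\ref{prop:sur}.

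For the implications starting from (1), suppose $H_1=\underline{f}^* H_2 \underline{g}$ for some $f\in G^n$, $g\in G^m$. Applying Lemma~\ref{lem:action} directly yields $\Psi(H_1)=\Psi(H_2)^f\sim \Psi(H_2)$, which gives (2), and $\Psi_L(H_1)=\Psi_L(H_2)^g\sim \Psi_L(H_2)$, which gives (3). These directions are purely formal and pose no difficulty.

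The real content lies in the converse implications. For (2)$\Rightarrow$(1), pick $f\in G^n$ witnessing $\Psi(H_1)=\Psi(H_2)^f$. By Lemma~\ref{lem:action} the $G$-phase $\underline{f}^*H_2$ satisfies $\Psi(\underline{f}^*H_2)=\Psi(H_2)^f=\Psi(H_1)$, so Theorem~\ref{th:r} produces $g\in G^m$ with $H_1=(\underline{f}^*H_2)\underline{g}=\underline{f}^*H_2\underline{g}$, i.e.\ $H_1\sim_{l\times r}H_2$ (this is exactly the composition described in Remark~\ref{rem:transitive}). The argument for (3)$\Rightarrow$(1) is symmetric: starting from $g\in G^m$ with $\Psi_L(H_1)=\Psi_L(H_2)^g$, apply Lemma~\ref{lem:action} to $H_2\underline{g}$ and then Theorem~\ref{th:l} to obtain the required $f\in G^n$ with $H_1=\underline{f}^*H_2\underline{g}$. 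The main conceptual subtlety is keeping track of the fact that the action of $f$ only influences $\Psi$ while the action of $g$ only influences $\Psi_L$, so each converse combines one ``free direction'' (taken from the switching datum) with a completion provided by the appropriate orbit theorem.

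Finally, to construct $\mathcal L$, note that Proposition~\ref{prop:sur} ensures $\Psi\colon \mathcal H_\Gamma\to G(\Gamma)$ is surjective, so every class $[\psi]\in[G(\Gamma)]$ equals $[\Psi(H)]$ for some $H\in\mathcal H_\Gamma$. Set $\mathcal L([\Psi(H)]):=[\Psi_L(H)]$. Well-definedness follows from (2)$\Rightarrow$(3): if $[\Psi(H_1)]=[\Psi(H_2)]$ then $H_1\sim_{l\times r}H_2$, hence $[\Psi_L(H_1)]=[\Psi_L(H_2)]$. Injectivity follows from (3)$\Rightarrow$(2): if $[\Psi_L(H_1)]=[\Psi_L(H_2)]$ then $H_1\sim_{l\times r}H_2$, so $[\Psi(H_1)]=[\Psi(H_2)]$.
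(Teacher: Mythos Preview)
Your proposal is correct and follows essentially the same route as the paper's proof: both argue (1)$\Leftrightarrow$(2) and (1)$\Leftrightarrow$(3) by combining Lemma~\ref{lem:action} with Theorem~\ref{th:r} (resp.\ Theorem~\ref{th:l}) via an intermediate $G$-phase, exactly as in Remark~\ref{rem:transitive}. Your explicit verification that $\mathcal L$ is well defined and injective (using Proposition~\ref{prop:sur} for surjectivity of $\Psi$) spells out what the paper leaves implicit in the phrase ``In particular''.
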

\begin{proof}${}$\\
(1)$\iff $(2)\\
Suppose $H_1\sim_{l\times r} H_2$, so that $H_2=\underline{f}^* H_1\underline{g}$. By Lemma \ref{lem:action} it follows that
$$
\Psi(H_2)=\Psi(\underline{f}^* H_1\underline{g})=\Psi(H_1)^{f},
$$
that is $\Psi(H_1)\sim\Psi(H_2)$.
\\ Vice versa, if $\Psi(H_2)=\Psi(H_1)^f$, we define $H_0:=\underline{f}^* H_1\in \mathcal H_\Gamma$. By Lemma \ref{lem:action}, we have
$$
\Psi(H_0)=\Psi(\underline{f}^* H_1)=\Psi(H_1)^f=\Psi(H_2).
$$
By virtue of Theorem \ref{th:r}, we have $H_0\sim_r H_2$. But we also have, by definition of $H_0$,  $H_0\sim_l H_1$. It follows from Remark \ref{rem:transitive} that $H_1\sim_{l\times r} H_2$.\\
(1)$\iff $(3)\\
Suppose $H_1\sim_{l\times r} H_2$, so that $H_2=\underline{f}^* H_1\underline{g}$. By Lemma \ref{lem:action} it follows that
$$
\Psi_L(H_2)=\Psi_L(\underline{f}^* H_1\underline{g})=\Psi_L(H_1)^{g},
$$
that is $\Psi_L(H_1)\sim\Psi_L(H_2)$.
\\ Vice versa, if $\Psi_L(H_2)=\Psi_L(H_1)^g$, we define $H_0:= H_1 \underline{g} \in \mathcal H_\Gamma$. By Lemma \ref{lem:action}, we have
$$\Psi_L(H_0)=\Psi_L( H_1\underline{g})=\Psi_L(H_1)^g=\Psi_L(H_2).$$
 By virtue of Theorem \ref{th:l}, we have $H_0\sim_l H_2$. But we also have, by definition of $H_0$,  $H_0\sim_r H_1$. It follows from Remark \ref{rem:transitive} that $H_1\sim_{l\times r} H_2$.
\end{proof}

\begin{corollary}\label{cor:nuovo}
Let  $(\psi,\zeta)$ be a pair in $G(\Gamma)\times G(L(\Gamma))$. The following are equivalent:
\begin{enumerate}
\item $(\psi,\zeta)$ is compatible;
\item $\mathcal L([\psi])=[\zeta]$.
\end{enumerate}
In particular, if $(\psi,\zeta)$ is compatible then $(\psi',\zeta')$ is compatible for any $\psi'\sim \psi$ and any $\zeta'\sim\zeta$.
\end{corollary}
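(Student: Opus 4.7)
The plan is to leverage Theorem \ref{th:rl} together with Lemma \ref{lem:action}. The implication (1)$\Rightarrow$(2) is essentially a matter of unwinding definitions: if $(\psi,\zeta)$ is compatible, pick $H\in\mathcal H_\Gamma$ with $\Psi(H)=\psi$ and $\Psi_L(H)=\zeta$; then by the very definition of $\mathcal L$ given in the conclusion of Theorem \ref{th:rl}, one has $\mathcal L([\psi])=\mathcal L([\Psi(H)])=[\Psi_L(H)]=[\zeta]$.

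The nontrivial direction is (2)$\Rightarrow$(1): starting from $\mathcal L([\psi])=[\zeta]$ we have to produce a single $G$-phase $H'$ realizing both $\psi$ and $\zeta$ on the nose, not just up to switching. First, I would invoke Proposition \ref{prop:sur} (applied to any orientation $\mathfrak o$ of $\Gamma$) to obtain $H:=H_{\mathfrak o}(\psi)\in \mathcal H_\Gamma$ with $\Psi(H)=\psi$. By the defining property of $\mathcal L$ from Theorem \ref{th:rl}, we then know $[\Psi_L(H)]=\mathcal L([\psi])=[\zeta]$, so there exists $g\in G^m$ with $\zeta=\Psi_L(H)^g$. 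The key step is now to push $H$ forward by the right action: setting $H':=H\underline{g}=l\times r((1_{G^n},g),H)$, Lemma \ref{lem:action} (with trivial $f$) gives simultaneously
\[
\Psi(H')=\Psi(H)^{1_{G^n}}=\Psi(H)=\psi,\qquad \Psi_L(H')=\Psi_L(H)^{g}=\zeta,
\]
so $(\psi,\zeta)$ is compatible by Definition \ref{def:compatible}.

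The ``in particular'' clause follows immediately from the equivalence (1)$\Leftrightarrow$(2) together with the fact that $\mathcal L$ is defined on switching equivalence classes: if $(\psi,\zeta)$ is compatible and $\psi'\sim\psi$, $\zeta'\sim\zeta$, then $[\psi']=[\psi]$ and $[\zeta']=[\zeta]$, whence $\mathcal L([\psi'])=\mathcal L([\psi])=[\zeta]=[\zeta']$, so $(\psi',\zeta')$ is compatible too.

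I do not foresee a real obstacle here; the only subtle point is the asymmetry between the two factors in Lemma \ref{lem:action}, namely that $\Psi$ is controlled by $f$ alone and $\Psi_L$ by $g$ alone. This is precisely what allows us to adjust $\Psi_L(H)$ via the right action while keeping $\Psi(H)$ untouched, and it is what makes compatibility a genuine property of switching classes rather than of individual representatives.
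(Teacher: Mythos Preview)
Your proof is correct and follows essentially the same route as the paper: both directions use Theorem~\ref{th:rl} for the definition of $\mathcal L$, Proposition~\ref{prop:sur} to produce an $H$ with $\Psi(H)=\psi$, and then Lemma~\ref{lem:action} applied to $H\underline{g}$ (with trivial left factor) to realize the pair $(\psi,\zeta)$ exactly. Your treatment of the ``in particular'' clause is slightly more explicit than the paper's, but the argument is the same.
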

\begin{proof}
Let $(\psi,\zeta)=(\Psi(H),\Psi_L(H))$, for some $H\in \mathcal{H}_\Gamma$. By Theorem \ref{th:rl} we have $[\zeta]=[\Psi_L(H)]= \mathcal L([\Psi(H)])= \mathcal L([\psi])$.\\
\indent Vice versa,  suppose  $\mathcal L([\psi])=[\zeta]$.
By Proposition \ref{prop:sur}, there exists $H\in \mathcal H_\Gamma$ such that $\Psi(H)=\psi$.
Then $[\zeta]=\mathcal L([\psi])=\mathcal L([\Psi(H)])=[\Psi_L(H)]$. In particular, there exists $g\in G^m$ such that $\zeta=\Psi_L(H)^g$.
Then $(\psi,\zeta)= (\Psi(H),\Psi_L(H)^g) = (\Psi(H\underline{g}),\Psi_L(H\underline{g}))$ by Lemma~\ref{lem:action}.
\end{proof}
Corollary \ref{cor:nuovo} shows that the compatible pairs are exactly all the choices of representatives of $[\psi]$ and  $\mathcal L([\psi])$.
In particular, the properties introduced in Definition \ref{def:compatible}, like the property of being \emph{gain-line function} in $G(L(\Gamma))$ or the property of being \emph{compatible} in $G(\Gamma)\times G(L(\Gamma))$, do not depend on the choice of representatives of the switching classes.

Then we can interpret $\mathcal L([\psi])$ as the \emph{gain-line graph} of $[\psi]$, and  the map $\mathcal L$ is the line graph in the context of gain graphs up to switching equivalence.\\
\indent Now let $N_\Gamma(G)$ be the matrix defined in Example \ref{ex:N}.
\begin{corollary}\label{cor:bal}
For $H\in \mathcal H_\Gamma$, the following are equivalent:
\begin{enumerate}
\item $H\sim_{l\times r} N_\Gamma(G)$;
\item $\Psi(H)\sim \bold{s_1}$ on $\Gamma$, that is $[\Psi(H)]=[ \bold{s_1}]$;
\item $\Psi_L(H)\sim \bold{s_2}$ on $L(\Gamma)$, that is $[\Psi_L(H)]=[ \bold{s_2}]$.
\end{enumerate}
In particular, with the choice $s_1=s_2=1_G$, for a compatible pair
$(\psi,\zeta)\in G(\Gamma)\times G(L(\Gamma))$ one has that $(\Gamma,\psi)$ is balanced if and only if $(L(\Gamma),\zeta)$ is balanced.
\end{corollary}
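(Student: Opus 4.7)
The plan is straightforward: identify $\Psi(N_\Gamma(G))$ and $\Psi_L(N_\Gamma(G))$ by direct computation, and then invoke Theorem \ref{th:rl} to translate the orbit condition (1) into conditions on the images of $\Psi$ and $\Psi_L$.

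First I would compute $\Psi(N_\Gamma(G))$. Since every nonzero entry of $N_\Gamma(G)$ equals $1_G$ (Example \ref{ex:N}), Eq. \eqref{eq:H} gives, for each edge $e_k=\{v_i,v_j\}$,
\[
\Psi(N_\Gamma(G))(v_i,v_j)=s_1\cdot 1_G\cdot 1_G^{-1}=s_1,
\]
so $\Psi(N_\Gamma(G))=\mathbf{s_1}$. The same calculation with Eq. \eqref{eq:LH} yields $\Psi_L(N_\Gamma(G))(e_i,e_j)=s_2\cdot 1_G^{-1}\cdot 1_G=s_2$, hence $\Psi_L(N_\Gamma(G))=\mathbf{s_2}$.

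Next I would invoke Theorem \ref{th:rl}, which tells us that $H\sim_{l\times r}N_\Gamma(G)$ is equivalent to $\Psi(H)\sim\Psi(N_\Gamma(G))$ and also equivalent to $\Psi_L(H)\sim\Psi_L(N_\Gamma(G))$. Combined with the previous paragraph, this gives immediately the three-way equivalence
\[
H\sim_{l\times r}N_\Gamma(G)\iff\Psi(H)\sim\mathbf{s_1}\iff\Psi_L(H)\sim\mathbf{s_2},
\]
which is exactly (1) $\iff$ (2) $\iff$ (3).

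For the final ``in particular'' clause, recall from the preliminaries that a $G$-gain graph $(\Gamma,\psi)$ is balanced iff $\psi\sim\mathbf{1_G}$ (cf.\ \cite[Lemma~5.3]{zaslavsky1}), and analogously for $(L(\Gamma),\zeta)$. Specializing $s_1=s_2=1_G$, the constant gains $\mathbf{s_1}$ and $\mathbf{s_2}$ both become $\mathbf{1_G}$, so conditions (2) and (3) read precisely as the balance of $(\Gamma,\psi)$ and of $(L(\Gamma),\zeta)$ respectively. By compatibility of $(\psi,\zeta)$ there exists $H\in\mathcal H_\Gamma$ with $\Psi(H)=\psi$ and $\Psi_L(H)=\zeta$; applying (2) $\iff$ (3) to this $H$ yields the equivalence of the two balance properties. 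There is no real obstacle here: the whole argument is a direct computation followed by a citation of Theorem \ref{th:rl}, and the only thing worth double-checking is that both instances of Theorem \ref{th:rl} are used (one for $\Psi$, one for $\Psi_L$) since the corollary packages both conclusions simultaneously.
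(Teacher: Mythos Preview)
Your proof is correct and follows exactly the same route as the paper: compute $\Psi(N_\Gamma(G))=\mathbf{s_1}$ and $\Psi_L(N_\Gamma(G))=\mathbf{s_2}$ directly from the definitions, then apply Theorem~\ref{th:rl}. The paper's proof is terser (it omits the explicit calculations and the unpacking of the ``in particular'' clause), but the argument is identical.
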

\begin{proof}
Notice that, by Definition \ref{eq:psi}, we have $\Psi(N_\Gamma(G))=\bold{s_1}$ and, by Definition \ref{def:Lpsi}, we have $\Psi_L(N_\Gamma(G))=\bold{s_2}$. Then the equivalences follow from Theorem \ref{th:rl}.
\end{proof}

\begin{remark}
When $G=\mathbb{T}_4$, with the choices $s_1=-1$ and $s_2=1$, our Corollary \ref{cor:bal} implies that the  switching equivalence class of the line graph is balanced if and only if the switching equivalence class of the starting graph is \emph{antibalanced}. This is
the content of \cite[Theorem~2]{francesco}.\\
\indent Moreover, when $G=\{\pm1\}$ with $s_1=s_2=-1$, our Corollary \ref{cor:bal} implies that the switching equivalence class  of the line graph is balanced if and only if the switching equivalence class  of the starting graph is balanced, as one can deduce from \cite[Section~V]{zasmat}.
\end{remark}

We have seen that the map $\mathcal L\colon [G(\Gamma)]\to [G(L(\Gamma))]$ is defined up to switching equivalence. However, paying the price to make a particular choice by fixing an orientation $\mathfrak o$ of $\Gamma$, we can \emph{lift} the map $\mathcal L$ to a map $\mathcal L_{\mathfrak o}\colon  G(\Gamma)\to G(L(\Gamma))$.
This is possible thanks to the map $H_{\mathfrak o}$, introduced in Definition \ref{H0}, that in Proposition \ref{prop:sur} is proved to be a \emph{section} of $\Psi\colon \mathcal H_{\Gamma}\to G(\Gamma)$. By putting $\mathcal L_{\mathfrak o}:=\Psi_L\circ H_{\mathfrak o}$ we have that, by Theorem \ref{th:rl}, the following diagram commutes.
$$
\begin{tikzcd}
G(\Gamma) \arrow[r,"\mathcal L_{\mathfrak o}" ]  \arrow[d,two heads]  &       G(L(\Gamma)) \arrow[d,two heads]\\
\left[G(\Gamma)\right] \arrow[r,"\mathcal L" ]       &           \left[G(L(\Gamma))\right]\\
\end{tikzcd}$$
For a more complete picture of the relations among these maps see Fig. \ref{fig:uovo}.
We are also able to give in Proposition \ref{nutella} an explicit description of the map $\mathcal L_{\mathfrak o}$.
In particular, the rules given in Eq. \eqref{eq:yu} are represented in Fig. \ref{fig:lift}.

\begin{proposition}\label{nutella}
Let $\psi$ be a gain function on $\Gamma$, and let $\mathfrak o$ be a fixed orientation for its edges.
Let $e_a,e_b\in E_\Gamma$ such that $e_a=\{v_1,v_2\}$, $e_b=\{v_2,v_3\}$, so that $\{e_a,e_b\}\in E_{L(\Gamma)}$. Then:
\begin{equation}\label{eq:yu}
\mathcal L_{\mathfrak o}(\psi)(e_a,e_b)=\begin{cases}
s_1s_2 \psi(v_2,v_3) &\mbox{ if } e^{\mathfrak o}_a=(v_1,v_2), e^{\mathfrak o}_b=(v_2,v_3)\\
s_2 &\mbox{ if } e^{\mathfrak o}_a=(v_1,v_2),e^{\mathfrak o}_b=(v_3,v_2)\\
 s_2 \psi(v_1,v_2)  \psi(v_2,v_3)&\mbox{ if } e^{\mathfrak o}_a=(v_2,v_1),e^{\mathfrak o}_b=(v_2,v_3).
\end{cases}
\end{equation}
\end{proposition}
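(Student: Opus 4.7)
The plan is to unfold the definition $\mathcal L_{\mathfrak o} := \Psi_L \circ H_{\mathfrak o}$ and compute directly, doing a small case analysis on the orientations of the two edges at their common endpoint. Since $v_2 = e_a \cap e_b$, Eq.~\eqref{eq:LH} immediately gives
$$\mathcal L_{\mathfrak o}(\psi)(e_a,e_b) \;=\; \Psi_L(H_{\mathfrak o}(\psi))(e_a,e_b) \;=\; s_2 \, H_{\mathfrak o}(\psi)_{2,a}^{-1}\, H_{\mathfrak o}(\psi)_{2,b}.$$
By Definition~\ref{H0}, the entry $H_{\mathfrak o}(\psi)_{2,a}$ equals $s_1$ when $e_a^{\mathfrak o}$ terminates at $v_2$ and equals $\psi(v_2,v_1)$ when $e_a^{\mathfrak o}$ starts at $v_2$; the entry $H_{\mathfrak o}(\psi)_{2,b}$ is read off analogously with respect to $v_3$.

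I would then substitute these readings into the formula in each of the three prescribed configurations. In the first case both entries at $v_2$ are $s_1$ and $\psi(v_2,v_3)$, respectively, so the product becomes $s_2 s_1^{-1}\psi(v_2,v_3)$, which collapses to $s_1 s_2\,\psi(v_2,v_3)$ using that $s_1 \in Z(G)$ with $s_1^{-1}=s_1$. In the second case both relevant entries equal $s_1$, giving $s_2 s_1^{-1} s_1 = s_2$. In the third case both entries at $v_2$ are genuine $\psi$-values, so after applying $\psi(v_2,v_1)^{-1}=\psi(v_1,v_2)$ (which is part of being a gain function) the expression reduces to $s_2 \,\psi(v_1,v_2)\,\psi(v_2,v_3)$.

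No deep step is required: the whole argument is a direct unfolding. The only things that need care are tracking the centrality of $s_1, s_2$ and the weak-involution identities $s_1^2 = s_2^2 = 1_G$, together with the antisymmetry of $\psi$. The fourth possible orientation configuration $e_a^{\mathfrak o}=(v_2,v_1)$, $e_b^{\mathfrak o}=(v_3,v_2)$, not listed explicitly in the statement, is handled either by the same direct substitution or by applying the gain-function identity $\mathcal L_{\mathfrak o}(\psi)(e_a,e_b) = \mathcal L_{\mathfrak o}(\psi)(e_b,e_a)^{-1}$ to Case~1 after swapping the roles of $v_1$ and $v_3$.
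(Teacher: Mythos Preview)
Your proof is correct and follows essentially the same approach as the paper: unfold $\mathcal L_{\mathfrak o}=\Psi_L\circ H_{\mathfrak o}$ via Eq.~\eqref{eq:LH}, read off the two entries $H_{\mathfrak o}(\psi)_{2,a}$ and $H_{\mathfrak o}(\psi)_{2,b}$ from Definition~\ref{H0} according to the orientation at $v_2$, and combine. Your explicit case-by-case simplifications (and the remark on the fourth configuration) spell out a bit more than the paper does, but the argument is the same.
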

\begin{proof}
For $\{e_a,e_b\}\in E_{L(\Gamma)}$, $v_2= e_a\cap e_b$, by Definition \ref{def:Lpsi} we have:
\begin{equation}\label{eq:li}
\mathcal{L}_{\mathfrak o}(\psi)(e_a,e_b)=\Psi_L(H_{\mathfrak o}(\psi))(e_a,e_b)=s_2 H_{\mathfrak o}(\psi)_{2,a}^{-1} H_{\mathfrak o}(\psi)_{2,b}.
\end{equation}
Since $e_a=\{v_1,v_2\}$, $e_b=\{v_2,v_3\}$, by Definition \ref{H0} we have:
\begin{equation}\label{eq:oo}
H_{\mathfrak o}(\psi)_{2,a}:=
\begin{cases}
\psi(v_2,v_1) &\mbox{ if } (v_2,v_1)\in \mathfrak o\\
s_1 &\mbox{ if } (v_1,v_2)\in \mathfrak o
\end{cases}\quad H_{\mathfrak o}(\psi)_{2,b}:=
\begin{cases}
\psi(v_2,v_3) &\mbox{ if } (v_2,v_3)\in \mathfrak o\\
s_1 &\mbox{ if } (v_3,v_2)\in \mathfrak o.
\end{cases}
\end{equation}
By gluing together Eqs. \eqref{eq:li} and \eqref{eq:oo}, we obtain Eq. \eqref{eq:yu} and the proof is completed.
 \end{proof}

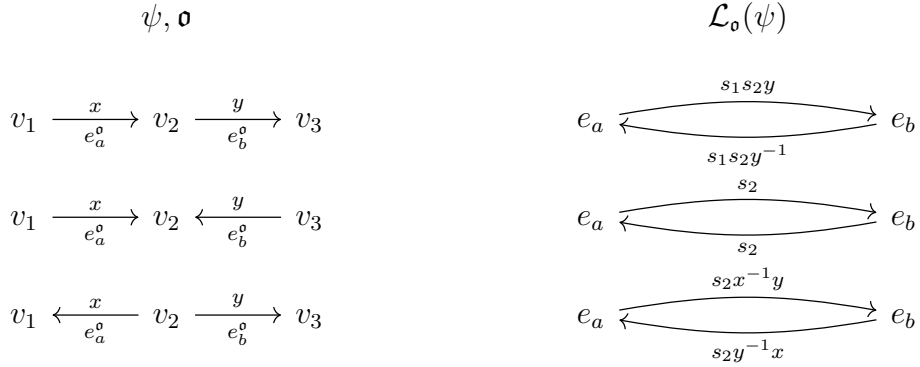
\begin{figure}
\begin{center}
$$
\begin{tikzcd}[scale=0.9]
&&\psi, \mathfrak o& &&&  &\mathcal L_{\mathfrak o}(\psi)& \\
&v_1\arrow[r,"x","e_a^{\mathfrak o}"']&v_2 \arrow[r,"y","e_b^{\mathfrak o}"']&v_3&&&{e_a} \arrow[rr,"s_1s_2 y",bend left=10] &&{e_b} \arrow[ll,"s_1s_2 y^{-1}",bend left=10]\\
&v_1\arrow[r,"x","e_a^{\mathfrak o}"']&v_2&v_3  \arrow[l,"y"',"e_b^{\mathfrak o}"]&&&{e_a} \arrow[rr,"s_2",bend left=10] &&{e_b} \arrow[ll,"s_2",bend left=10]\\
&v_1&v_2 \arrow[l,"x"',"e_a^{\mathfrak o}"]\arrow[r,"y","e_b^{\mathfrak o}"']&v_3&&&{e_a} \arrow[rr,"s_2x^{-1} y",bend left=10] && {e_b} \arrow[ll,"s_2 y^{-1} x",bend left=10]
\end{tikzcd}
$$
\end{center}\caption{The construction of  $\mathcal L_{\mathfrak o}(\psi)$.}\label{fig:lift}
\end{figure}
\begin{example}
Consider the Paw graph $P$, its $\mathbb{Q}_8$-gain function $\psi$ and its orientation $\mathfrak o_<$ described in Example \ref{ex:paw}, with the choices $s_1=s_2=-1$.
On the left side of Fig. \ref{fig:pawline} the gain graph $(P,\psi)$ is depicted with the orientation of the edges given by $\mathfrak{o}_{<}$.
On the right side of Fig. \ref{fig:pawline} the associated gain-line graph $(L(P),\mathcal L_{\mathfrak{o}_{<}}(\psi))$ is depicted. Notice how the gain function $\mathcal L_{\mathfrak{o}_{<}}(\psi)$ can be constructed via the rules described in Fig. \ref{fig:lift}.
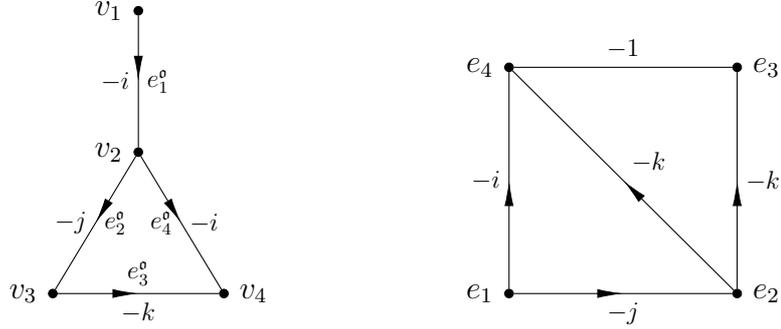
\begin{figure}
\begin{center}
\begin{tikzpicture}[scale=0.75]

\begin{scope}
[decoration={
    markings,
    mark=at position 0.5 with {\arrow{Latex[length=3.4mm, width=1.2mm]}}}
    ]
\tikzset{vertex/.style = {draw,circle,fill=black,minimum size=4pt,
                            inner sep=0pt}}
\tikzset{uedge/.style = {-,> = latex'}}

% vertices
\node[vertex] (v3) at  (0,0) [scale=0.8,label=left:$v_3$]{};
\node[vertex] (v4) at  (3,0) [scale=0.8,label=right:$v_4$]{};
\node[vertex] (v2) at  (1.5,2.5) [scale=0.8,label=left:$v_2$]{};
\node[vertex] (v1) at  (1.5,5) [scale=0.8,label=left:$v_1$]{};

\node[vertex] (e1) at  (8,0) [scale=0.8,label=left:$e_1$]{};
\node[vertex] (e2) at  (12,0) [scale=0.8,label=right:$e_2$]{};
\node[vertex] (e3) at  (12,4) [scale=0.8,label=right:$e_3$]{};
\node[vertex] (e4) at  (8,4) [scale=0.8,label=left:$e_4$]{};

%edges
\draw[postaction={decorate}]  (v1) -- (v2)node[scale=0.8,midway,left] {$-i$}node[scale=0.8,midway,right] {$e^{\mathfrak o}_1$};
\draw[postaction={decorate}]  (v2) -- (v3)node[scale=0.8,midway,left] {$-j$}node[scale=0.8,midway,right] {$e^{\mathfrak o}_2$};
\draw[postaction={decorate}]  (v3) -- (v4)node[scale=0.8,midway, below] {$-k$}node[scale=0.8,midway,above] {$e^{\mathfrak o}_3$};
\draw[postaction={decorate}] (v2) -- (v4)node[scale=0.8,midway, right] {$-i$}node[scale=0.8,midway,left] {$e^{\mathfrak o}_4$};

\draw[postaction={decorate}]  (e1) -- (e2)node[scale=0.8,midway,below] {$-j$};
\draw[postaction={decorate}]  (e2) -- (e3)node[scale=0.8,midway,right] {$-k$};
\draw (e3) -- (e4)node[scale=0.8,midway,above] {$-1$};
\draw[postaction={decorate}]  (e1) -- (e4)node[scale=0.8,midway,left] {$-i$};
\draw[postaction={decorate}]  (e2) -- (e4)node[scale=0.8,midway,above right] {$-k$};
\end{scope}

\end{tikzpicture}
\end{center}\caption{The gain graph $(P,\psi)$ and its gain-line graph $(L(P),\mathcal{L}_{\mathfrak{o}_{<}}(\psi))$ associated with the orientation $\mathfrak o_<$.}  \label{fig:pawline}
\end{figure}
\end{example}

Observe that, when $G=\{\pm1\}$, with $s_1=s_2=-1$, this construction of signed line graphs is consistent, up to switching equivalence, with the one outlined in \cite{zasmat}.\\
\indent Moreover, when $G=\mathbb{T}_4$ with $s_1=-1$ and $s_2=1$, the line $\mathbb T_4$-gain graph that we obtain is switching equivalent to that in \cite[Eq.~(5)]{francesco}.

\subsection{Line $G$-phased graph}\label{line}
The aim of this section is to compare our definition of line graph with the definition of line graph of an oriented $G$-gain graph given in \cite{reff0}.
More precisely, Reff defined a map from the set of $G$-phases of $\Gamma$ to the set of $G$-phases of $L(\Gamma)$, and he proved that it is well defined with respect to the switching equivalence of the gain functions obtained from the $G$-phases (see \cite[Theorem~4.2]{reff0}). We are going to show that our machinery is consistent with that of \cite{reff0} and generalizes the result of Reff to the nonabelian case.

Recall that $L(\Gamma)=(V_{L(\Gamma)},E_{L(\Gamma)})$, with $V_{L(\Gamma)}=E_\Gamma$ and fix for $V_{L(\Gamma)}$ the same order of $E_\Gamma$.
Suppose $|E_{L(\Gamma)}|=q$ and let us fix an order $E_{L(\Gamma)}=\{E_1,\ldots,  E_q\}$. For an edge $E_k=\{e_i,e_j\}\in E_{L(\Gamma)}$, let us set $v(E_k):=e_i\cap e_j\in V_\Gamma$.\\
\indent Notice that $\mathcal{H}_\Gamma\subseteq M_{n\times m}(\mathbb C G)$ and $\mathcal{H}_{L(\Gamma)}\subseteq M_{m\times q}(\mathbb C G)$.
\begin{definition}\label{def:L}
Put $L\colon \mathcal H_\Gamma \to \mathcal H_{L(\Gamma)}$ such that
\begin{equation*}
L(H)_{i,k}=\begin{cases}
0 &\mbox{ if } e_i\notin E_k\\
H_{l,i}^{-1} &\mbox{ if } e_i\in E_k, v(E_k)=v_l.\\
\end{cases}
\end{equation*}
\end{definition}
Observe that our Definition \ref{def:L} coincides with the definition given in \cite[Eq.~(4.1)]{reff0}.\\
\noindent With a given $G$-phased graph $(\Gamma,H)$ we can associate the \emph{$G$-phased-line graph $(L(\Gamma),L(H))$}.
Among the results of the next theorem there is that the pair $(\Psi(H),s_1s_2 \Psi(L(H)))\in G(\Gamma)\times G(L(\Gamma))$ is compatible, or equivalently $\mathcal L([\Psi(H)])=[s_1s_2 \Psi(L(H))]$.
\begin{theorem}\label{prop:line}
For any $H\in \mathcal{H}_\Gamma$, for any $g\in G^m$, for any $f\in G^n$, one has:
\begin{enumerate}
\item
$L(H\underline{g})=\underline{g}^* L(H)$;
\item
$L(\underline{f}^*H)=L(H)\underline{f'}$, for some $f'\in G^q$;
\item
$\Psi_L(H)=s_1s_2 \Psi(L(H))$.
\end{enumerate}
\end{theorem}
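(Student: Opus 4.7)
The plan is to verify all three assertions by direct entry-by-entry computation from Definition \ref{def:L}, combined with the defining formulas for $\Psi$ and $\Psi_L$ (Definitions \ref{eq:psi} and \ref{def:Lpsi}). Throughout I will use that, by Lemma \ref{lem:pippo}, the $G$-phase property is preserved by the actions of $\underline{f}^*$ on the left and $\underline{g}$ on the right, so that inverses of nonzero entries make sense at every step.

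For part (1), I would write $(H\underline{g})_{l,i} = H_{l,i}\, g_i$, which has the same zero pattern as $H$. Inverting this group entry gives $((H\underline{g})_{l,i})^{-1} = g_i^{-1} H_{l,i}^{-1}$, so Definition \ref{def:L} yields
\begin{equation*}
L(H\underline{g})_{i,k} \;=\; g_i^{-1}\, H_{l,i}^{-1} \;=\; g_i^{-1}\, L(H)_{i,k} \;=\; (\underline{g}^{*} L(H))_{i,k}
\end{equation*}
whenever $e_i \in E_k$ with $v(E_k) = v_l$, and both sides vanish otherwise.

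For part (2), the key observation is that the vertex $v(E_k) \in V_\Gamma$ depends only on the edge $E_k$, not on which of the two elements of $E_k$ one picks. Hence I would \emph{define} $f' \in G^q$ by $f'_k := f_{l}$ where $v_l = v(E_k)$; this is well posed precisely because of the above observation. Then $(\underline{f}^{*}H)_{l,i} = f_l^{-1} H_{l,i}$ and its inverse is $H_{l,i}^{-1} f_l$, giving
\begin{equation*}
L(\underline{f}^{*}H)_{i,k} \;=\; H_{l,i}^{-1} f_l \;=\; L(H)_{i,k}\, f'_k \;=\; (L(H)\,\underline{f'})_{i,k}.
\end{equation*}

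For part (3), I would fix $\{e_i, e_j\} \in E_{L(\Gamma)}$ with $v_k = e_i \cap e_j$, and let $E_l \in E_{L(\Gamma)}$ be the edge $\{e_i,e_j\}$, so that $v(E_l) = v_k$. By Definition \ref{def:L} we have $L(H)_{i,l} = H_{k,i}^{-1}$ and $L(H)_{j,l} = H_{k,j}^{-1}$. Applying Definition \ref{eq:psi} to the $G$-phase $L(H)$ of the graph $L(\Gamma)$ (with vertex set $E_\Gamma$) gives
\begin{equation*}
\Psi(L(H))(e_i,e_j) \;=\; s_1\, L(H)_{i,l}\, (L(H)_{j,l})^{-1} \;=\; s_1\, H_{k,i}^{-1}\, H_{k,j}.
\end{equation*}
Multiplying by $s_1 s_2$ and using that $s_1$ is central with $s_1^2 = 1_G$, the right-hand side becomes $s_2 H_{k,i}^{-1} H_{k,j}$, which is exactly $\Psi_L(H)(e_i,e_j)$ by Definition \ref{def:Lpsi}.

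The main point to watch is in part (2), namely checking that $f'$ is well defined as an element of $G^q$; the rest is bookkeeping between the row-indices of $H$ (vertices of $\Gamma$), the column-indices of $H$ (edges of $\Gamma$, i.e.\ vertices of $L(\Gamma)$), and the column-indices of $L(H)$ (edges of $L(\Gamma)$).
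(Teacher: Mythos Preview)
Your proof is correct and follows essentially the same approach as the paper's own proof: all three parts are verified by direct entry-by-entry computation from Definition~\ref{def:L} together with Definitions~\ref{eq:psi} and~\ref{def:Lpsi}, with the same construction of $f'$ in part~(2). Your remark about well-definedness of $f'$ is slightly over-cautious, since $v(E_k)$ is defined as $e_i\cap e_j$ and hence is automatically a function of $E_k$ alone, but this does not affect correctness.
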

\begin{proof}${}$\\
(1)\\
If $e_i\notin E_k$, clearly $L(H\underline{g})_{i,k}=0=(\underline{g}^* L(H))_{i,k}$.
Now suppose that $e_i\in E_k$ and $v(E_k)=v_l$. Then:
\begin{equation*}
L(H\underline{g})_{i,k}=(H\underline{g})_{l,i}^{-1}= (H_{l,i}g_i)^{-1}= g_i^{-1}H_{l,i}^{-1}=g_i^{-1} L(H)_{i,k}=(\underline{g}^* L(H))_{i,k}
\end{equation*}
and the thesis follows.\\
(2)\\
For every $k=1,\ldots,q$, there exists $l_k\in\{1,\ldots,n\}$ such that $v(E_k)=v_{l_k}$.
For a given $f\in G^n$, let us define the element $f'\in G^q$ such that $f'_k=f_{l_k}$.
If  $e_i\notin E_k$, clearly $L(\underline{f}H)_{i,k}=0=(L(H)\underline{f'})_{i,k}$.
Now suppose that $e_i\in E_k$ and $v(E_k)=v_{l_k}$. Then
\begin{equation*}
L(\underline{f}^*H)_{i,k}=(\underline{f}^*H)_{l_k,i}^{-1}=(f_{l_k}^{-1} H_{l_k,i})^{-1}= H_{l_k,i}^{-1} f_{l_k}= L(H)_{i,k} f'_k=(L(H)\underline{f'})_{i,k}
\end{equation*}
and the thesis follows.\\
(3)\\
From Definition \ref{eq:psi} we have
$\Psi(L(H))(e_i,e_j)=s_1 L(H)_{i,k} (L(H)_{j,k})^{-1}$
with $E_k=\{e_i,e_j\}\in E_{L(\Gamma)}$. Suppose that $v_l=e_i\cap e_j \in V_\Gamma$. Then by Definition \ref{def:L} we have
$L(H)_{i,k}=H_{l,i}^{-1}$ and $L(H)_{j,k}=H_{l,j}^{-1}$, so that:
$$
\Psi(L(H))(e_i,e_j)=s_1 L(H)_{i,k} (L(H)_{j,k})^{-1}=s_1 H_{l,i}^{-1} H_{l,j}.
$$
On the other hand, by Definition \ref{def:Lpsi}, we also have $\Psi_L(H)(e_i,e_j)=s_2 H_{l,i}^{-1} H_{l,j}$ and the thesis follows.
\end{proof}
As a consequence, when $s_1=s_2$ and the interest is only in the gain-line graph and not in its $G$-phase, considering $\Psi(L(H))$ or $\Psi_L(H)$ is equivalent. Under these assumptions, our results in the previous sections can be applied in the context of \cite{reff0}.
More precisely,  when $G$ is abelian with $s_1=s_2$, Corollary \ref{cor:bal}, combined with Theorem \ref{prop:line}, generalizes the content of \cite[Proposition~4.1]{reff0}. Moreover Theorem \ref{th:rl}, combined with Theorem \ref{prop:line}, gives a generalization of the content of \cite[Theorem~4.2]{reff0}. In particular, we give a complete answer to \cite[Question~2]{reff0}.

\begin{figure}
\begin{center}
\begin{tikzcd}[row sep=small]
&&&G(\Gamma)\arrow[ddr,two heads,bend left=5]  \\
&&&\\
& \mathcal H_\Gamma / \sim_{l\cap r} \arrow[lddd,"{\tiny _{Cor.\ref{cor:cap}} }",hook]&&& \left[G(\Gamma)\right] \ar[dddddd, hook', "\mathcal L"  description,"{\tiny \; \; _{Thm. \ref{th:rl}} }", dashed, bend left=45]\\
&&&\\
&&& \mathcal H_\Gamma/\sim_{r} \arrow[uuuu,hook,two heads] \arrow[dr,two heads]\\
G(\Gamma)\times G(L(\Gamma)) \arrow[rrruuuuu,two heads, bend left=40]  \arrow[rrrddddd,two heads, bend right=40]&&\mathcal H_\Gamma  \arrow[luuu,two heads]   \arrow[ll,"\Psi\times\Psi_{L}"] \arrow[ur,two heads]
\arrow[from=uuuuur, "{\tiny _{H_{\mathfrak o}}}" description, bend right=50,dotted]
 \arrow[uuuuur, "\Psi"  description, "{\tiny   _{Thm. \ref{th:r}} }"',"{\tiny _{Prop.\ref{prop:sur}}}",two heads] \arrow[dr,two heads] \arrow[dddddr,,"{\tiny ^{^{Thm. \ref{th:l}}}}","\Psi_L" description]  \arrow[lddd, "L" description]&  &\mathcal H_\Gamma/\sim_{l\times r} \arrow[uuu,hook,"{\tiny _{Thm. \ref{th:rl}}}",two heads] \arrow[ddd,"{\tiny _{Thm. \ref{th:rl}}}"',hook']& \\
&&&  \mathcal H_\Gamma/\sim_{l} \arrow[dddd,hook'] \arrow[ur,two heads] \\
&&&\\
&\mathcal H_{L(\Gamma)} \arrow[rrdd," s_1s_2\Psi"',"{\tiny ^{^{Thm. \ref{prop:line}}}}",two heads]&&&\left[G(L(\Gamma))\right]\\
&& &\\
&&& G(L(\Gamma)) \arrow[uur,two heads,bend right=5] \arrow[from=uuuuuuuuuu,dashed, "\mathcal L_{\mathfrak o}" description, bend left=149]
\end{tikzcd}
\end{center}\caption{Summary of Section \ref{sec:phases}.}\label{fig:uovo}
\end{figure}
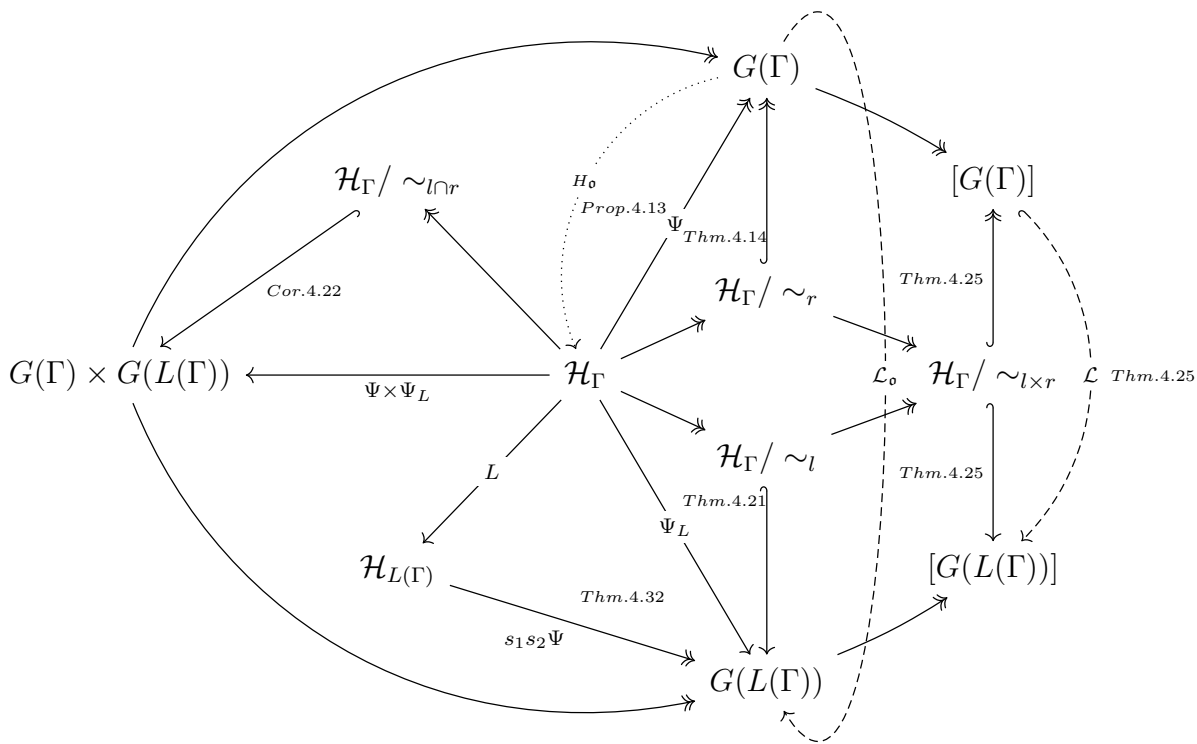

\section{Represented Adjacency, Laplacian and $G$-phase matrices}\label{sec:rep}

We start this section by recalling some basic definitions about group representations. The interested reader is referred to \cite{fulton} for further details.\\
 A \textit{representation} of $G$ on a vector space $V$ of dimension $k$ is a group homomorphism
$
\pi\colon G\to GL_k(V),
$
where $GL_k(V)$ is the group of all bijective $\mathbb{\mathbb C}$-linear maps from $V$ to itself. Notice that the group $GL_k(V)$ can be naturally identified with the group $GL_k(\mathbb{C})$ of all invertible matrices of size $k$ over $\mathbb{C}$. Therefore, with a small abuse of notation, one can denote by $\pi$ the homomorphism that associates with an element $g\in G$ the matrix in $GL_k(\mathbb{C})$ corresponding to $\pi(g)$. The dimension $k$ of $V$ is called the \emph{degree} of $\pi$. \\
\indent A representation $\pi$ is said to be \emph{unitary} if $\pi(g)\in U_k(\mathbb{C})$, for each $g\in G$, with $U_k(\mathbb{C}) = \{M\in GL_k(\mathbb{C}) : M^{-1}=M^\ast\}$, where $M^*$ denotes the Hermitian transpose of $M$; $\pi$ is said to be \textit{faithful} if only the neutral element of $G$ is sent to the identity matrix.\\
\indent A representation $\pi$ of $G$ on $V$ is said to be \textit{irreducible} if there is no proper invariant subspace $W\subset V$ under the action of $G$, that is, there is no $W$ such that $\pi(g)w\in W$ for any $w\in W$ and for any $g\in G$. It is known that each representation $\pi$ of a group $G$ can be decomposed into the direct sum of irreducible representations.

For any representation $\pi$ of $G$ of degree $k$, for any element $f\in \mathbb C G$, with $f=\sum_{x\in G} f_x x$, the \emph{Fourier transform of $f$ at the representation $\pi$} is the matrix $\hat{f}(\pi)\in M_k(\mathbb C)$ such that:
$$\hat{f}(\pi)=\sum_{x\in G} f_x \pi(x).$$
Moreover, with a given $A =(A_{ij})\in M_{n\times m}(\mathbb CG)$, we can associate a matrix $\widehat{A}(\pi)\in M_{nk\times mk }(\mathbb C)$, that we can write as a block matrix $n\times m$ of type
$$
\left(\begin{array}{c|c|c|c}
                      \widehat{A}(\pi)_{1,1} & \widehat{A}(\pi)_{1,2} & \cdots & \widehat{A}(\pi)_{1,m} \\\hline
                       \widehat{A}(\pi)_{2,1} & \widehat{A}(\pi)_{2,2} & \cdots & \widehat{A}(\pi)_{2,m} \\   \hline
                         \vdots & \vdots & \ddots  & \vdots \\ \hline
                       \widehat{A}(\pi)_{n,1} & \widehat{A}(\pi)_{n,2} & \cdots & \widehat{A}(\pi)_{n,m} \
                       \end{array}\right)
$$
where  the block  $\widehat{A}(\pi)_{i,j}\in M_{k\times k}(\mathbb C)$ is the Fourier transform of $A_{i,j}\in \mathbb C G$ at $\pi$.

In particular, starting from the adjacency matrix of a gain graph $A_{\Gamma,\psi}\in M_{n\times n}(\mathbb CG)$, the matrix  $\widehat{A_{\Gamma,\psi}}(\pi)\in M_{nk\times nk}(\mathbb C)$ is obtained by replacing each entry $g\in G$ with the $k\times k$ block given by $\pi(g)$, and each zero entry with a zero matrix of size $k$.
The matrix $\widehat{A_{\Gamma,\psi}}(\pi)$ is the \emph{represented adjacency matrix} $\widehat{A_{\Gamma,\psi}}(\pi)$ of the gain graph $(\Gamma,\psi)$ with respect to the representation $\pi$.\\
\indent The \emph{represented Laplacian matrix} $\widehat{\Delta^s_{\Gamma,\psi}}(\pi)$ of the gain graph $(\Gamma,\psi)$ with respect to the representation $\pi$ is defined in a similar way, starting from $\Delta^s_{\Gamma,\psi}$.

When the representation is faithful, the matrix  $\widehat{A_{\Gamma,\psi}}(\pi)$ contains all the information about the gain graph $(\Gamma,\psi)$. For example, if the spectrum of  $\widehat{A_{\Gamma,\psi}}(\pi)$ is known, then it is possible to establish whether $(\Gamma,\psi)$ is balanced or not (see \cite{nostro}).

Given $f_1,f_2\in \mathbb C G$ and a representation $\pi$ of $G$ of degree $k$, the following properties hold (see, for example, \cite{fulton}).
\begin{eqnarray} \label{eq:somma}
\widehat{(f_1+f_2)}(\pi)&=& \widehat{f_1}(\pi)+ \widehat{f_2}(\pi);
\end{eqnarray}
\begin{eqnarray}\label{eq:prodotto}
\widehat{f_1f_2}(\pi)&=& \widehat{f_1}(\pi) \widehat{f_2}(\pi);
\end{eqnarray}
moreover, if $\pi$ is unitary, then
\begin{equation}\label{eq:star}
\widehat{f^*}(\pi)=  \widehat{f}(\pi)^* \qquad \textrm{for each } f\in \mathbb C G.
\end{equation}

\begin{lemma}\label{lem:fou}
For every $A,B\in M_{n\times l}(\mathbb CG)$, $C\in M_{l\times m}(\mathbb CG)$, $g\in G$ and for every representation $\pi$ of $G$, one has:
\begin{enumerate}
\item $\widehat{(A+B)}(\pi)=\widehat{A}(\pi)+\widehat{B}(\pi)$
\item $\widehat{AC}(\pi)=\widehat{A}(\pi)\widehat{C}(\pi)$
\item $\widehat{gA}(\pi)=(I_n\otimes \pi(g)) \widehat{A}(\pi)$
\item $\widehat{Ag}(\pi)=\widehat{A}(\pi)(I_l\otimes \pi(g)).$
\end{enumerate}
Moreover, if $\pi$ is unitary, then $\widehat{A^*}(\pi)=\widehat{A}(\pi)^*$.
\end{lemma}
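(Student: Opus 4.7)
The plan is to prove each property block-by-block, exploiting the fact that $\widehat{A}(\pi)$ is, by construction, the $n\times m$ block matrix whose $(i,j)$-block equals the scalar Fourier transform $\widehat{A_{i,j}}(\pi) \in M_k(\mathbb{C})$. Each of the five claims will then reduce to the corresponding scalar identity (\eqref{eq:somma}, \eqref{eq:prodotto}, or \eqref{eq:star}) applied entry-wise, together with straightforward block-matrix bookkeeping.

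For (1), the $(i,j)$-block of $\widehat{(A+B)}(\pi)$ is $\widehat{A_{i,j}+B_{i,j}}(\pi)$, which by \eqref{eq:somma} equals $\widehat{A_{i,j}}(\pi)+\widehat{B_{i,j}}(\pi)$, matching the $(i,j)$-block of $\widehat{A}(\pi)+\widehat{B}(\pi)$. For (2), the $(i,j)$-block of $\widehat{AC}(\pi)$ is $\widehat{\sum_{r=1}^{l} A_{i,r}C_{r,j}}(\pi)$; applying scalar linearity and then \eqref{eq:prodotto} term by term, this is $\sum_{r=1}^{l}\widehat{A_{i,r}}(\pi)\widehat{C_{r,j}}(\pi)$, which is precisely the $(i,j)$-block of the product of block matrices $\widehat{A}(\pi)\widehat{C}(\pi)\in M_{nk\times mk}(\mathbb{C})$.

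For (3) and (4), I would use the embedding $G\hookrightarrow \mathbb{C}G$ together with $\hat{g}(\pi)=\pi(g)$. Recalling from Eq.\ \eqref{ettore} that $(gA)_{i,j}=gA_{i,j}$, the $(i,j)$-block of $\widehat{gA}(\pi)$ is $\widehat{gA_{i,j}}(\pi)=\pi(g)\widehat{A_{i,j}}(\pi)$ by \eqref{eq:prodotto}. Since $I_n\otimes \pi(g)$ is the block-diagonal matrix with $\pi(g)$ in each of its $n$ diagonal blocks, left-multiplying the block matrix $\widehat{A}(\pi)$ by $I_n\otimes \pi(g)$ produces exactly $\pi(g)\widehat{A_{i,j}}(\pi)$ in block $(i,j)$. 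Claim (4) is symmetric, using right multiplication and the fact that $I_l\otimes \pi(g)$ multiplies each column block of $\widehat{A}(\pi)$ on the right by $\pi(g)$.

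For the last claim, assume $\pi$ is unitary. By definition of $A^\ast$ in $M_{m\times n}(\mathbb{C}G)$, the $(i,j)$-block of $\widehat{A^\ast}(\pi)$ is $\widehat{(A_{j,i})^\ast}(\pi)$, which by \eqref{eq:star} equals $\widehat{A_{j,i}}(\pi)^\ast$; this coincides with the $(i,j)$-block of the Hermitian transpose of the block matrix $\widehat{A}(\pi)$, i.e.\ of $\widehat{A}(\pi)^\ast$. There is no real obstacle in this lemma beyond keeping the block-indexing straight: every identity is a direct consequence of the corresponding scalar fact already recorded in Eqs.\ \eqref{eq:somma}, \eqref{eq:prodotto}, \eqref{eq:star}, and the observation that the Kronecker-product factors $I_n\otimes \pi(g)$ and $I_l\otimes\pi(g)$ implement, respectively, uniform left/right multiplication by $\pi(g)$ on each block.
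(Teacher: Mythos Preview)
Your proof is correct and follows essentially the same block-by-block strategy as the paper. The only cosmetic difference is in parts (3) and (4): the paper observes that $gA=diag(g,\ldots,g)\,A$ and $Ag=A\,diag(g,\ldots,g)$, notes that $\widehat{diag(g,\ldots,g)}(\pi)=I_n\otimes\pi(g)$, and then invokes the already-proved part (2), whereas you compute the $(i,j)$-block directly via $\widehat{gA_{i,j}}(\pi)=\pi(g)\widehat{A_{i,j}}(\pi)$; both routes amount to the same computation.
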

\begin{proof}
By Eq. \eqref{eq:somma} we get the following equation of $k\times k$ blocks:
$$
\widehat{(A+B)}(\pi)_{i,j}= \widehat{(A+B)_{i,j}}(\pi)=\widehat{(A_{i,j}+B_{i,j})}(\pi)=\widehat{A_{i,j}}(\pi)+\widehat{B_{i,j}}(\pi)=(\widehat{A}(\pi)+\widehat{B}(\pi))_{i,j},
$$
and then (1) is proved.
By virtue of Eqs. \eqref{eq:somma} and \eqref{eq:prodotto} we have the following equation of $k\times k$ blocks:
\begin{equation*}
\begin{split}
\widehat{AC}(\pi)_{i,j}&= \widehat{(AC)_{i,j}}(\pi)=\widehat{\sum_{k=1}^l A_{i,k}C_{k,j}}(\pi)=\sum_{k=1}^l  \widehat{A_{i,k}C_{k,j}}(\pi)=\sum_{k=1}^l  \widehat{A_{i,k}}(\pi) \widehat{C_{k,j}}(\pi)\\&=
(\widehat{A}(\pi)\widehat{C}(\pi))_{i,j},
\end{split}
\end{equation*}
and then (2) is proved.\\
\indent By noticing that $gA=diag(g,\ldots,g) A$ and  $A g = A\, diag(g,\ldots,g)$,  combining with  $\widehat{diag(g,\ldots,g)}(\pi)=(I_n\otimes \pi(g))$, also (3) and (4) are proved.\\
\indent Finally, if $\pi$ is unitary, from Eq. \eqref{eq:star} we get the following equation of $k\times k$ blocks:
$$
\widehat{A^*}(\pi)_{i,j}=\widehat{(A^*)_{i,j}}(\pi)=\widehat{(A_{j,i})^*}(\pi)=\widehat{(A_{j,i})}(\pi)^*=(\widehat{A}(\pi)^*)_{i,j}.
$$
\end{proof}
In particular, the represented adjacency matrix and the represented Laplacian matrix of a gain graph, with respect to a finite dimensional unitary representation $\pi$, are Hermitian matrices, with real spectra.
\begin{definition}
For a matrix $A$ we denote by $\sigma(A)$ the \emph{spectrum} of $A$, that is, the (multi)set of the eigenvalues of $A$.
For a $G$-gain graph $(\Gamma,\psi)$ and a unitary representation $\pi$ of $G$, the $\pi$-spectrum of $(\Gamma,\psi)$ is the spectrum $\sigma(\widehat{A_{\Gamma,\psi}}(\pi))$.
\end{definition}
By applying the Fourier transform to both sides of Eqs. \eqref{eq:r} and \eqref{eq:l}, we obtain a \emph{represented} version of Lemma \ref{lem:r} and Lemma \ref{lem:l}, concerning  represented adjacency and Laplacian matrices and the Fourier transform of a $G$-phase, that we call \emph{represented $G$-phase}.
Notice that, if $\pi_0$ is the $1$-dimensional trivial representation (that is, $\pi_0(g)=1$, for every $g\in G$), a represented $G$-phase is nothing but the classical incidence matrix $N$; in this case, Eqs. \eqref{eq:r} and \eqref{eq:l} reduce to Eqs. \eqref{eq:1} and \eqref{eq:2}, respectively.

\begin{remark}\label{rem:ponte}
If $G$ is (a subgroup of) $\mathbb T$ we always have a canonical isomorphism
$$
\pi_{id}: \mathbb T \longrightarrow U_1(\mathbb C),
$$
associating the $1\times 1$ matrix $(z)$ with each $z\in \mathbb{T}$: in particular, the map $\pi_{id}$ is a faithful unitary representation of degree $1$.\\
\indent For a $\mathbb T$-gain graph $(\Gamma,\Psi)$, with adjacency matrix $A_{\Gamma,\psi}\in M_n(\mathbb C \mathbb T)$ and Laplacian matrix $\Delta^s_{\Gamma,\psi}\in  M_n(\mathbb C \mathbb T)$, the matrices $\widehat{A_{\Gamma,\psi}}(\pi_{id})$ and $\widehat{\Delta^s_{\Gamma,\psi}}(\pi_{id})$ correspond to the classical adjacency and (signless) Laplacian matrix of a complex unit gain graph (in the sense of \cite{reff1}).\\
\indent When $G=\{\pm 1\}$, a represented $G$-phase $\widehat{H}(\pi_{id})$ (with $s_1=-1$) is the incidence matrix (or bidirection) in the sense of \cite[Section~IV]{zasmat}.\\
\indent When $G=\mathbb T_4$ (with $s_1=-1$ and $s_2=1$), the matrix $\widehat{H}(\pi_{id})$ is the incidence matrix in the sense of \cite{francesco}.\\
\indent Finally when $G=\mathbb T$ (with $s_1=s_2$), the matrix $\widehat{H}(\pi_{id})$ is the incidence matrix in the sense of \cite[Section~5.1]{reff0}.\\
\indent For all these reasons, our theory is consistent with (and generalizes) the one in \cite{francesco, reff0, zasmat}.
\end{remark}

\subsection{Spectral conditions to be a gain-line graph}
As we have already observed, the Fourier transform provides a represented version of Lemma \ref{lem:l}. In this section, we develop our group representation approach in order to obtain some necessary conditions for a gain graph to be a gain-line graph (see Definition~\ref{def:compatible}).

\begin{theorem}\label{th:-1}
Let $H\in \mathcal H_\Gamma$ and let $\pi$ be a unitary representation of $G$ of degree $k$. Then:
$$
\widehat{A_{L(\Gamma),\Psi_L(H)}}(\pi)=\ \left(I_m\otimes \pi(s_2)\right) \left(\widehat{H}(\pi)^*\widehat{H}(\pi)-2 I_{km}\right).
$$
\end{theorem}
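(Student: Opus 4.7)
The plan is to rewrite Lemma~\ref{lem:l} as an identity expressing $A_{L(\Gamma),\Psi_L(H)}$ in terms of $H^*H$, and then apply the Fourier transform at $\pi$ termwise, using the compatibility properties collected in Lemma~\ref{lem:fou}.

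First, I would start from Eq.~\eqref{eq:l}, namely
$$
H^*H = 2\cdot\underline{1_{G^m}} + s_2\, A_{L(\Gamma),\Psi_L(H)}.
$$
Since $s_2$ is a central weak involution, $s_2^2 = 1_G$ and $s_2$ commutes with every entry of $A_{L(\Gamma),\Psi_L(H)}$, so multiplying on the left by $s_2$ yields
$$
A_{L(\Gamma),\Psi_L(H)} = s_2\bigl(H^*H - 2\cdot\underline{1_{G^m}}\bigr).
$$

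Next, I would apply $\widehat{(\cdot)}(\pi)$ to both sides. By parts (1) and (3) of Lemma~\ref{lem:fou}, the left multiplication by the scalar $s_2\in\mathbb{C}G$ becomes a Kronecker factor, giving
$$
\widehat{A_{L(\Gamma),\Psi_L(H)}}(\pi) = \bigl(I_m\otimes\pi(s_2)\bigr)\Bigl(\widehat{H^*H}(\pi) - 2\,\widehat{\underline{1_{G^m}}}(\pi)\Bigr).
$$
Then I would use part (2) of Lemma~\ref{lem:fou} to split $\widehat{H^*H}(\pi) = \widehat{H^*}(\pi)\,\widehat{H}(\pi)$, and the unitarity clause of the same lemma to rewrite $\widehat{H^*}(\pi) = \widehat{H}(\pi)^*$. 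Finally, $\underline{1_{G^m}}$ is the diagonal matrix with all diagonal entries equal to $1_G$, so its Fourier transform at $\pi$ has each $k\times k$ diagonal block equal to $\pi(1_G) = I_k$, i.e.\ $\widehat{\underline{1_{G^m}}}(\pi) = I_m\otimes I_k = I_{km}$. Assembling the pieces gives exactly the claimed identity.

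There is essentially no obstacle here: the proof is a direct transport of Lemma~\ref{lem:l} across the Fourier transform. The only minor point that needs a line of justification is the bookkeeping of the scalar $s_2$: one must observe that $s_2$ is central in $G$ (and hence in $\mathbb{C}G$ and in $M_m(\mathbb{C}G)$), so the products $s_2\cdot A_{L(\Gamma),\Psi_L(H)}$ and $A_{L(\Gamma),\Psi_L(H)}\cdot s_2$ coincide, and on the represented side $(I_m\otimes\pi(s_2))$ factors out cleanly on the left.
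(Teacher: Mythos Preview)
Your proof is correct and follows essentially the same route as the paper: both start from Lemma~\ref{lem:l}, isolate $A_{L(\Gamma),\Psi_L(H)}$ via the involution $s_2$, and push the identity through the Fourier transform using Lemma~\ref{lem:fou}. The only cosmetic difference is that the paper applies $\widehat{(\cdot)}(\pi)$ first and then multiplies both sides by $(I_m\otimes\pi(s_2))^{-1}=I_m\otimes\pi(s_2)$, whereas you move $s_2$ across on the $\mathbb{C}G$ level before transforming.
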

\begin{proof}
By Lemma \ref{lem:l} we have:
$s_2 A_{L(\Gamma), \Psi_L(H)}=H^*H-2\cdot \underline{ 1_{G^m} }$. Then Lemma \ref{lem:fou} implies:
\begin{eqnarray*}
\left(I_m\otimes\pi(s_2)\right)\widehat{A_{L(\Gamma),\Psi_L(H)}}(\pi) &=&\widehat{(s_2 A_{L(\Gamma),\Psi_L(H)})}(\pi)=\widehat{(H^*H-2\cdot \underline{ 1_{G^m} })}(\pi)\\
&=&\widehat{H}(\pi)^*\widehat{H}(\pi)-2 I_{km}.
\end{eqnarray*}
We conclude by using the fact that $(I_m\otimes\pi(s_2))^{-1}=I_m\otimes\pi(s_2)$ since $s_2$ is an involution.
\end{proof}

Let $\pi$ be an irreducible representation of degree $k$. Since $s_2$ is central, as a consequence of the Schur's Lemma (see \cite{fulton}), the matrix  $\pi(s_2)$ is a scalar matrix. Moreover, it must be $\pi(s_2)^2=I_k$, since $s_2$ is an involution. This implies that either $\pi(s_2)=I_k$ or $\pi(s_2)=-I_k$. The two cases are treated in Corollary \ref{cor:1} and Corollary \ref{cor:2}, respectively.\\

\begin{corollary}\label{cor:1}
Let $\pi$ be a unitary representation of $G$ of degree $k$ such that $\pi(s_2)=I_k$. Then, for every $H\in \mathcal H_\Gamma$:
$$
\sigma(\widehat{A_{L(\Gamma), \Psi_L(H)}}(\pi))\subseteq [-2,\infty).
$$
In particular, if $s_2=1_G$, the $\pi$-spectrum of a gain-line  graph is contained in  $[-2,\infty)$ for any unitary representation $\pi$.
\end{corollary}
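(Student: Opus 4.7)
The plan is to deduce the corollary directly from Theorem~\ref{th:-1}, turning the spectral bound into a positivity statement about a Gram-type matrix. First I would specialize Theorem~\ref{th:-1} under the assumption $\pi(s_2) = I_k$: then $I_m \otimes \pi(s_2) = I_{km}$, and the identity there collapses to
$$\widehat{A_{L(\Gamma),\Psi_L(H)}}(\pi) = \widehat{H}(\pi)^*\widehat{H}(\pi) - 2I_{km}.$$

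Next I would argue that $\widehat{H}(\pi)^*\widehat{H}(\pi)$ is positive semi-definite. Lemma~\ref{lem:fou}, together with unitarity of $\pi$, ensures that $\widehat{H}(\pi)^*$ is the ordinary Hermitian transpose of $\widehat{H}(\pi)$, so for any $v \in \mathbb{C}^{km}$ one has $v^*\widehat{H}(\pi)^*\widehat{H}(\pi)v = \|\widehat{H}(\pi)v\|^2 \geq 0$. Hence the spectrum of $\widehat{H}(\pi)^*\widehat{H}(\pi)$ lies in $[0,\infty)$, and subtracting $2I_{km}$ shifts the spectrum into $[-2,\infty)$, proving the first claim.

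For the ``in particular'' clause, when $s_2 = 1_G$ we have $\pi(s_2) = \pi(1_G) = I_k$ automatically for every representation $\pi$, so the hypothesis of the first part is always satisfied. By Definition~\ref{def:compatible}, every gain-line graph is of the form $(L(\Gamma), \Psi_L(H))$ for some underlying graph $\Gamma$ and some $H \in \mathcal{H}_\Gamma$, so the bound just proved applies verbatim to its $\pi$-spectrum. I do not anticipate a real obstacle here: the substantive content is already in Theorem~\ref{th:-1}, and what remains is the elementary observation that a matrix of the form $M^*M$ has non-negative spectrum. The only point worth being careful about is ensuring that the expression $\widehat{H}(\pi)^*\widehat{H}(\pi)$ really is a Gram matrix with respect to the standard Hermitian structure, which is precisely why the unitarity of $\pi$ enters via Lemma~\ref{lem:fou}.
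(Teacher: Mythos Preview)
Your proposal is correct and follows essentially the same route as the paper's proof: specialize Theorem~\ref{th:-1} under $\pi(s_2)=I_k$, then use that $\widehat{H}(\pi)^*\widehat{H}(\pi)$ is positive semidefinite to conclude. The paper is slightly terser, but your additional remarks (invoking Lemma~\ref{lem:fou} for the Hermitian transpose and unpacking the ``in particular'' clause via Definition~\ref{def:compatible}) are all accurate elaborations of the same argument.
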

\begin{proof}
From Theorem \ref{th:-1} we have $\widehat{A_{L(\Gamma), \Psi_L(H)}}(\pi)=\widehat{H}(\pi)^*\widehat{H}(\pi)-2 I_{km}$
and then
$$
\sigma(\widehat{A_{L(\Gamma), \Psi_L(H)}}(\pi))=\sigma( \widehat{H}(\pi)^*\widehat{H}(\pi))-2.
$$
The thesis follows since the matrix $\widehat{H}(\pi)^*\widehat{H}(\pi)$ is positive semidefinite.
\end{proof}
\begin{corollary}\label{cor:2}
Let $\pi$ be a unitary representation of $G$ of degree $k$ such that $\pi(s_2)=-I_k$. Then, for every $H\in \mathcal H_\Gamma$:
$$
\sigma(\widehat{A_{L(\Gamma), \Psi_L(H)}}(\pi))\subseteq (-\infty,2].
$$
\end{corollary}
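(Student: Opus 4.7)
The plan is to reduce this to a direct computation using Theorem~\ref{th:-1}, mirroring the proof of Corollary~\ref{cor:1} but exploiting the opposite sign from the hypothesis $\pi(s_2)=-I_k$. Since the statement is the ``dual'' of Corollary~\ref{cor:1}, I expect no genuine obstacle; the only subtlety is keeping track of the sign introduced by $I_m\otimes \pi(s_2)$.

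First I would invoke Theorem~\ref{th:-1}, which provides the identity
\[
\widehat{A_{L(\Gamma),\Psi_L(H)}}(\pi)=\bigl(I_m\otimes \pi(s_2)\bigr)\bigl(\widehat{H}(\pi)^*\widehat{H}(\pi)-2I_{km}\bigr).
\]
Under the hypothesis $\pi(s_2)=-I_k$, the factor $I_m\otimes \pi(s_2)$ is simply $-I_{km}$, so the identity collapses to
\[
\widehat{A_{L(\Gamma),\Psi_L(H)}}(\pi)=2I_{km}-\widehat{H}(\pi)^*\widehat{H}(\pi).
\]

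Next I would read off the spectrum: the eigenvalues of $\widehat{A_{L(\Gamma),\Psi_L(H)}}(\pi)$ are of the form $2-\lambda$, where $\lambda$ ranges over $\sigma(\widehat{H}(\pi)^*\widehat{H}(\pi))$. Since $\widehat{H}(\pi)^*\widehat{H}(\pi)$ is positive semidefinite (as $M^*M$ is positive semidefinite for any complex matrix $M$), each such $\lambda$ satisfies $\lambda\geq 0$, whence $2-\lambda\leq 2$. This gives the desired inclusion
\[
\sigma\bigl(\widehat{A_{L(\Gamma),\Psi_L(H)}}(\pi)\bigr)\subseteq (-\infty,2].
\]

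The whole argument is three lines, so there is no real main obstacle; the only thing to be careful about is that the matrix on the left-hand side is indeed Hermitian (hence has real spectrum), which follows from the fact that $\pi$ is unitary together with Lemma~\ref{lem:fou}, and that the spectrum of $2I_{km}-M$ is literally $\{2-\lambda : \lambda\in \sigma(M)\}$ when $M$ is Hermitian.
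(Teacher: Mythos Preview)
Your proof is correct and follows essentially the same route as the paper's own proof: invoke Theorem~\ref{th:-1}, substitute $\pi(s_2)=-I_k$ to obtain $\widehat{A_{L(\Gamma),\Psi_L(H)}}(\pi)=2I_{km}-\widehat{H}(\pi)^*\widehat{H}(\pi)$, and conclude from the positive semidefiniteness of $\widehat{H}(\pi)^*\widehat{H}(\pi)$.
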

\begin{proof}
From Theorem \ref{th:-1} we have $\widehat{A_{L(\Gamma), \Psi_L(H)}}(\pi)=-\widehat{H}(\pi)^*\widehat{H}(\pi)+2 I_{km}$
and then
$$
\sigma(\widehat{A_{L(\Gamma), \Psi_L(H)}}(\pi))=-\sigma( \widehat{H}(\pi)^*\widehat{H}(\pi))+2.
$$
The thesis follows since the matrix $\widehat{H}(\pi)^*\widehat{H}(\pi)$ is positive semidefinite.
\end{proof}
As we observed in Remark~\ref{rem:ponte}, when $G=\mathbb T$ the classical spectrum of a $\mathbb T$-gain graph is exactly its $\pi_{id}$-spectrum.
When $s_2=1$ clearly we have $\pi_{id}(s_2)=1$ and our Corollary \ref{cor:1} generalizes \cite[Theorem~5.5]{reff0}.
When $s_2=-1$ clearly we have $\pi_{id}(s_2)=-1$ and our Corollary \ref{cor:2} generalizes \cite[Theorem~5.4]{reff0}.

\begin{corollary}\label{cor:gain-line}
Let $\pi$ be a unitary irreducible representation of $G$. A $G$-gain graph whose $\pi$-spectrum contains eigenvalues less than $-2$ and eigenvalues greater than $2$ cannot be a gain-line graph, whatever the choice of $s_2$ is.
\end{corollary}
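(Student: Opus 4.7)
The plan is to argue by cases on the value of $\pi(s_2)$, using the dichotomy imposed by irreducibility together with the sign constraint coming from $s_2$ being a weak involution. Since $s_2 \in Z(G)$ and $\pi$ is irreducible and unitary, Schur's Lemma forces $\pi(s_2)$ to be a scalar multiple of the identity matrix $I_k$. The relation $s_2^{\,2}=1_G$ then gives $\pi(s_2)^2=I_k$, so that the only possibilities are $\pi(s_2)=I_k$ or $\pi(s_2)=-I_k$.

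Assume towards a contradiction that $(L,\zeta)$ is a gain-line graph whose $\pi$-spectrum contains both an eigenvalue strictly less than $-2$ and an eigenvalue strictly greater than $2$. By Definition \ref{def:compatible}, there exist a graph $\Gamma$ and a $G$-phase $H\in\mathcal{H}_\Gamma$ with $L(\Gamma)=L$ and $\Psi_L(H)=\zeta$. If $\pi(s_2)=I_k$, then Corollary \ref{cor:1} applies and yields
\[
\sigma(\widehat{A_{L(\Gamma),\Psi_L(H)}}(\pi)) \subseteq [-2,\infty),
\]
which contradicts the existence of an eigenvalue strictly less than $-2$. If instead $\pi(s_2)=-I_k$, then Corollary \ref{cor:2} applies and yields
\[
\sigma(\widehat{A_{L(\Gamma),\Psi_L(H)}}(\pi)) \subseteq (-\infty,2],
\]
which contradicts the existence of an eigenvalue strictly greater than $2$. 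Either way we reach a contradiction, so the gain graph cannot be a gain-line graph, regardless of how $s_2$ is chosen.

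There is no real obstacle here: the whole content is packaged in Corollaries \ref{cor:1} and \ref{cor:2}, and the only point that requires mention is why these two corollaries exhaust all possibilities when $\pi$ is irreducible. That is precisely the Schur's Lemma argument, already invoked in the paragraph preceding Corollary \ref{cor:1}. The proof is therefore a short case split, and the statement can be rendered as a direct corollary of the preceding two results together with the scalar-matrix consequence of Schur's Lemma.
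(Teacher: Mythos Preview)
Your proof is correct and follows exactly the approach the paper intends: the paper states this corollary without an explicit proof, relying on the Schur's Lemma dichotomy spelled out in the paragraph preceding Corollary~\ref{cor:1} together with Corollaries~\ref{cor:1} and~\ref{cor:2}. You have simply made that implicit argument explicit.
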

In the next example, we show that the conditions obtained in the previous corollaries are not trivial, even if we assume that the underlying graph is a line graph.

\begin{example}\label{ex:qua}
Let $G=\mathbb{Q}_8$ and consider its (unitary and irreducible) representation $\pi$ in Table \ref{tableQ8}.
\begin{table}
\begin{tabular}{|c|c|c|c|}
\hline
$\pm 1$ & $\pm i$ & $\pm j$ & $\pm k$ \\
\hline
$\pm \left(
   \begin{array}{cc}
     1 & 0 \\
     0 & 1  \\
   \end{array}
 \right)$  &  $\pm\left(
   \begin{array}{cc}
     0 & -1 \\
     1 & 0  \\
   \end{array}
 \right)$ & $\pm \left(
   \begin{array}{cc}
     0 & i \\
     i & 0  \\
   \end{array}
 \right)$ & $\pm \left(
   \begin{array}{cc}
     -i & 0 \\
     0 & i  \\
   \end{array}
 \right)$  \\
\hline
\end{tabular}
\smallskip
\caption{The representation $\pi$ of $\mathbb{Q}_8$ of Example \ref{ex:qua}.}\label{tableQ8}
\end{table}
Let $(\Gamma,\psi)$ be the gain graph represented in Fig. \ref{fig:qline}.
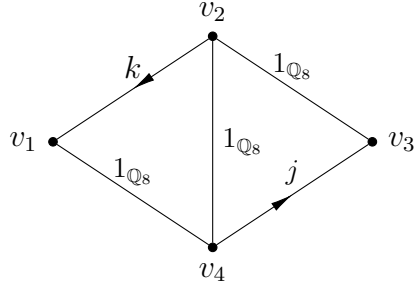
\begin{figure}
\begin{center}
\begin{tikzpicture}[scale=0.70]

\begin{scope}
[decoration={
    markings,
    mark=at position 0.5 with {\arrow{Latex[length=3mm, width=1.2mm]}}}
    ]
\tikzset{vertex/.style = {draw,circle,fill=black,minimum size=4pt,
                            inner sep=0pt}}
\tikzset{uedge/.style = {-,> = latex'}}

% vertices
\node[vertex] (a) at  (0,0) [scale=0.8,label=left:$v_1$]{};
\node[vertex] (b) at  (3,2) [scale=0.8,label=above:$v_2$]{};
\node[vertex] (c) at  (6,0) [scale=0.8,label=right:$v_3$]{};
\node[vertex] (d) at  (3,-2) [scale=0.8,label=below:$v_4$]{};

%edges
\draw[postaction={decorate}] (b) --  (a) node[midway, above] {$k$};
\draw[uedge]  (b) -- (d)node[scale=0.9,midway,right] {$1_{\mathbb Q_8}$};
\draw[uedge] (b) -- (c)node[scale=0.9,midway,above] {$1_{\mathbb Q_8}$};
\draw[uedge] (a) -- (d)node[scale=0.9,midway, above] {$1_{\mathbb Q_8}$};
\draw[postaction={decorate}] (d) -- (c)node[scale=0.9,midway, above] {$j$};
\end{scope}
\end{tikzpicture}
\end{center}\caption{The $\mathbb{Q}_8$-gain graph $(\Gamma,\psi)$.}\label{fig:qline}
\end{figure}
We have:
$$
A_{\Gamma,\psi}=\left(
   \begin{array}{cccc}
     0 & -k & 0 & 1 \\
     k & 0 & 1 & 1  \\
       0 & 1 & 0 & -j  \\
         1 & 1 & j & 0  \\
   \end{array}\right);\qquad \widehat{A_{\Gamma,\psi}}(\pi)=\left(
   \begin{array}{cccccccc}
     0 & 0 & i & 0 & 0 & 0 & 1 & 0 \\
     0 & 0 & 0 & -i & 0 & 0 & 0 & 1  \\
      -i & 0 & 0 & 0 & 1 & 0 & 1 & 0  \\
      0 & i & 0 & 0 & 0 & 1 & 0 & 1\\
      0 & 0 & 1 & 0 & 0 & 0 & 0 & -i\\
      0 & 0 & 0 & 1 & 0 & 0 & -i & 0\\
      1 & 0 & 1 & 0 & 0 & i & 0 & 0\\
      0 & 1 & 0 & 1 & i & 0 & 0 & 0\\
         \end{array}\right).
$$
An explicit computation gives $\sigma(\widehat{A_{\Gamma,\psi}}(\pi))=\left\{ \left(\pm \frac{1}{2} \sqrt{10\pm 2\sqrt{17}}\right)^{(2)} \right\}$, where the exponent of each eigenvalue denotes its multiplicity. Since $\frac{1}{2} \sqrt{10+ 2\sqrt{17}} \sim 2.135779$, we have:
$$
\sigma(\widehat{A_{\Gamma,\psi}}(\pi))\cap (-\infty,-2)\neq \emptyset\quad \mbox{ and } \quad \sigma(\widehat{A_{\Gamma,\psi}}(\pi))\cap (+2,\infty)\neq \emptyset.
$$
As a consequence of Corollary \ref{cor:gain-line}, the gain graph $(\Gamma,\psi)$ cannot be a gain-line graph, neither for $s_2=1$ nor for $s_2=-1$, although the underlying graph $\Gamma$ is the line graph of the Paw graph.
\end{example}

\end{document}